\theoremstyle{plain}
\newtheorem{thm}{Theorem}
\newtheorem{corol}{Corollary}
\newtheorem{lem}{Lemma}
\theoremstyle{definition}
\newtheorem{rem}{Remark}
\newtheorem*{rem*}{Remark}
\newtheorem*{rems*}{Remarks}
\theoremstyle{remark}
\newcommand\cD{{\mathcal D}}
\newcommand\cN{{\mathcal N}}
\newcommand\cM{{\mathcal M}}
\newcommand\N{{\mathbb N}}
\newcommand\R{{\mathbb R}}
\DeclareMathOperator{\E}{{\mathbb E}}
\DeclareMathOperator{\cov}{cov}
\newcommand{\CI}{\operatorname{CI}}
\DeclareMathOperator{\PP}{{\mathbb P}}
\renewcommand{\epsilon}{\varepsilon}
\DeclareMathOperator*{\weakcvge}{\Longrightarrow} %{\cvge^{(d)}}
\newcommand{\Lcvge}{\weakcvge_{n\to\infty}}
\newcommand{\as}{a.s.\@}
\newcommand{\iid}{i.i.d.\@}
\newcommand{\ie}{i.e.\@}
\newcommand{\wrt}{w.r.t.\@}
\title{Inference post region selection}
\author{
  Dominique Bontemps\thanks{Institut de Mathématiques de Toulouse, UMR5219; Université de Toulouse; CNRS; UPS, F-31062 Toulouse Cedex 9, France.}
  \and 
  François Bachoc\thanks{Institut de Mathématiques de Toulouse, UMR5219; Université de Toulouse; CNRS; UPS, F-31062 Toulouse Cedex 9, Institut universitaire de France (IUF), France.} % ANR ? FIXME
  \and 
  Pierre Neuvial\thanks{Institut de Mathématiques de Toulouse; UMR5219,
	Université de Toulouse; CNRS, 
    UPS, F-31062 Toulouse Cedex 9, France.}
}
\begin{document}

\maketitle

\begin{abstract}
Post-selection inference consists in providing statistical guarantees, based on a data set, that are robust to a prior model selection step on the same data set. In this paper, we address an instance of the post-selection-inference problem, where the model selection step consists in selecting a rectangular region in a spatial domain.
The inference step then consists in constructing confidence intervals on the average signal of this region.
This is motivated by applications such as genetics or brain imaging.
Our confidence intervals are constructed in dimension one, and then extended to higher dimension.
They are based on the process mapping all possible selected regions to their corresponding estimation errors on the average signal. We prove the functional convergence of this process to a limiting Gaussian process with explicit covariance. This enables us to provide confidence intervals with asymptotic guarantees. In numerical experiments with simulated data, we show that our coverage proportions are fairly close to the nominal level already for small to moderate data-set size. We also highlight the impact of various possible noise distributions and the robustness of our intervals. Finally, we illustrate the relevance of our method to a segmentation problem inspired by the analysis of DNA copy number data in cancerology.
\end{abstract}

\noindent {\bf Keywords:} 
Post-selection inference; 
reproducibility;
multiple testing;
Gaussian processes;
breakpoint detection; functional convergence in distribution.
MSC 62E20; 62F25, 62J15.

\section{Introduction}

A common practice in statistics is to select from data a model (defined for instance by a set of variables), which typically yields parameters to be estimated or hypotheses to be tested. 
However, most of the existing statistical methodology assumes that the model (and thus the parameters or hypotheses) are fixed. This gap between theory and practice induces a selection bias, which is addressed by the recent field of statistics called post-selection inference \cite{Berk13,taylor2015statistical}.

In this paper, we focus on a particular post-selection inference setting where a region is selected in a domain of interest, from data.
The goal is then to provide confidence intervals or tests for the average signal related to the choice of the selected regions. 
This setting arises in a number of applications: for instance, in genomic data analysis, for the detection of DNA copy number aberrations or differentially methylated regions; or, in neuroimaging data analysis, for the detection of brain regions associated to a specific cognitive task, see \cite{benjamini2018selection}.

Post-selection inference is an active field of statistics with many recent contributions.
A first strand of literature focuses on specific model selection procedures (for instance the Lasso \cite{tibshirani1996regression} for linear models) and provides confidence intervals or tests that are valid (in terms of coverage probability or type 1 error) conditionally to the selected model, see e.g. \cite{hyun2018post,lee15exact,tibshirani2018uniform}.
An approach specifically dedicated to the problem of conditional inference after region selection has been proposed by \cite{korthauer2019detection,benjamini2018selection}.

Another possible approach to address this problem consists in splitting the observations into two subsets, and use one for the selection of regions and the other one to perform inference on these regions. In our context, an easy way to implement this strategy in dimension one consists in taking the observations with even index for region selection, and those with odd index for inference. The downsides of this type of strategy are  a loss of statistical power and spatial resolution inherent to sample splitting, and the need to account for multiplicity at the inference stage. More recently, data thinning (or data fission) strategies consisting in splitting each individual observation have been explored in \cite{neufeld2024data-thinning,dharamshi2024generalized-data-thinning,leiner2025data-fission}. Such strategies generate two (or more) independent random data sets in such a way that, by applying a non-random operation (e.g the sum) to them, the initial dataset can be recovered. This idea can be applied to various distributions belonging to the exponential family. These strategies are intrinsically parametric, in the sense that the thinning/fission operation depends on the data distribution. As such, their robustness to deviations from the assumed distribution may be problematic. Moreover, they also depend on an hyperparameter which controls the proportion of information injected in each generated data set.

Another body of literature considers models that are selected arbitrarily \cite{Berk13,Kuchibhotla2020}. 
In a linear Gaussian regression setting, a model is defined as an arbitrary subset of covariates. For a family of admissible models, it is shown in \cite{Berk13} that a universal coverage property can be achieved by using a family of confidence intervals whose sizes are proportional to a so called Post-Selection Inference (PoSI) constant.
These results have been later extended to prediction problems \cite{bachoc2019valid} and to misspecified non-linear settings \cite{bachoc2020uniformly}.
The goal of the present paper is to obtain similar universal coverage properties for the problem of region selection, in the one-dimensional case (where regions of interest are typically intervals), with extensions to multi-dimensional cases (where regions of interest are hyper-rectangles). 
This is achieved by considering the process mapping all possible regions to the estimation error of their average signal. We prove that this process converges in distribution to an explicit Gaussian process, for which we can evaluate the quantiles of the supremum. This yields a practical method to compute confidence intervals, for which we can prove the asymptotic validity, in the post-region-selection context.

The article is organized as follows: \Cref{sec:1Dsettings} sets the one-dimensional framework in which we perform inference after the selection of regions, and \Cref{sec:cvrge_Wiener} presents our main results. \Cref{sec:1Dsimulations} shows some simulations in the 1D-setting, and \Cref{sec:illustration-segmentation} an illustration of the method for a segmentation problem. The proofs are given in \Cref{sec:proofs}. Then, \Cref{sec:higherdim} presents extensions to higher dimensions, including simulations in a two-dimensional setting in \Cref{sec:2Dsimulations}.

The proposed approach is implemented in the R package \texttt{posir}. This package is available at \url{https://github.com/dbontemps/posir}, together with scripts to reproduce the numerical experiments reported in the paper.

\section{Setting and notation} \label{sec:1Dsettings}

\subsection{Inference for one-dimensional regions}

Consider an observed random vector $Y$ of unknown mean $\mu\in\R^n$ :
\begin{equation} \label{eq:1Dmodel}
    Y = \mu + \epsilon,
\end{equation}
with a centered random error vector $\epsilon$, 
whose coordinates are supposed to be \iid{} with variance $\sigma^2$, but not necessarily Gaussian. 

We call region a connected subset $I$ of $\llbracket 1, n\rrbracket := \{1, 2, \ldots, n\}$ (in practice $I = \llbracket m_I+1, M_I\rrbracket$ is an interval since we work in dimension 1). We aim at estimating the regional mean
\[ \mu_I = \frac{1}{|I|} \sum_{i\in I} \mu_i, \]
where $|I| = M_I-m_I$ is the cardinal of $I$, 
using the corresponding empirical mean in the region $I$
\[ \widehat{\mu}_I = \frac{1}{|I|} \sum_{i\in I} Y_i. \]
Our goal is to suggest confidence intervals on $\mu_I$ based on $\widehat{\mu}_I$.
In practice, we will look for symetric intervals \wrt{} $\widehat{\mu}_I$ inspired from the Student $t$-statistics, \ie{}
\begin{equation} \label{eq:CI_shape}
  \CI_{I, \alpha} = \left[\widehat{\mu}_I \pm K_{1-\alpha}\, \frac{\widehat{\sigma}}{\sqrt{|I|}} \right],
\end{equation}
with $K_{1-\alpha}$ a $(1-\alpha)$-quantile to be defined later, and $\widehat{\sigma}^2$ a variance estimator.

We call model a collection $M=(I_k)_{1\leq k\leq K}$ of separate intervals. Let $\cM$ be a collection of such models, and suppose a selection procedure is used to select a model $\widehat{M}$ among $\cM$. We aim at obtaining simultaneous confidence intervals $\CI_{I, \alpha}$ for all $I\in \widehat{M}$ to estimate our targets $\mu_I$ in the selected model.

\subsection{PoSI approach}

We perform inference post region selection, so we aim for a guarantee that 
\begin{equation} \label{eq:aim_inf_post_sel}
  \Pr\big(\forall I\in \widehat{M}, \mu_I \in \CI_{I, \alpha}\big) \geq 1 - \alpha,
\end{equation}
for a given error level $\alpha>0$, without information on the selection procedure used to select $\widehat{M}$. 

A naive and commonly used way is to construct simultaneous confidence intervals $\CI_{M, j, \alpha}$ in all sub-models, so that
\begin{equation*} %\label{eq:rule_inf_post_sel}
  \sup_{M\in\mathcal{M}} \Pr\left(\bigcup_{I\in M} \big\{ \mu_I \notin \CI_{I, \alpha}\big\} \right) \leq \alpha.
\end{equation*}
However this approach is not sufficient to ensure that the simultaneous intervals have the desired error level in the selected model $\widehat{M}$. For instance a data-driven model selection procedure may have a tendency to actually select models in which one of the confidence intervals does not contain its target.

The PoSI approach promoted by \cite{Berk13} goes further, and (when adapted to our setting) demands that 
\begin{equation} \label{eq:PoSI_inf_post_sel}
  \Pr\left(\bigcup_{M\in\mathcal{M}} \bigcup_{I\in M} \big\{ \mu_I \notin \CI_{I, \alpha}\big\} \right) \leq \alpha.
\end{equation}

This relies on the upper bound 
\begin{equation} \label{eq:PoSI_core}
  \Pr\left(\bigcup_{I\in \widehat{M}} \big\{ \mu_I \notin \CI_{I, \alpha}\big\} \right) \leq \Pr\left(\bigcup_{M\in\mathcal{M}} \bigcup_{I\in M} \big\{ \mu_I \notin \CI_{I, \alpha}\big\} \right),
\end{equation}
which may seem overly conservative, but \cite{Berk13} proved that it is tight \emph{if no hypothesis is available on the selection procedure}. This was proved in the different setting of variable selection in linear regression, but the argument also applies here: one just has to actually choose an adversarial procedure that selects a model in which the simultaneous confidence intervals do not hold for relatively small values of $\alpha$.

Finally, a simplification is available with region selection: if a model $\widehat{M}=\big(\widehat{I}_k\big)_{1\leq k\leq \widehat{K}}$ has been selected among a collection $\cM$,
\begin{align}
  \Pr\left(\bigcup_{k=1}^{\widehat{K}} \big\{ \mu_{\widehat{I}_k} \notin \CI_{\widehat{I}_k, \alpha}\big\} \right)
  &\leq \Pr\left(\bigcup_{M\in\cM}\, \bigcup_{I\in M} \big\{ \mu_{I} \notin \CI_{I, \alpha}\big\} \right) \notag \\
  &= \Pr\left(\bigcup_{I : \exists M\in\cM, I\in M} \big\{ \mu_{I} \notin \CI_{I, \alpha}\big\} \right), \label{eq:PoSI_for_regions}
\end{align}
so it is enough to consider models composed of a unique interval $I$.

\subsection{Renormalizing the confidence intervals}

Going back to the form \eqref{eq:CI_shape} of our proposed confidence intervals, we can also write
\begin{align}
  \Pr\left(\bigcup_{M\in\cM}\, \bigcup_{I\in M} \big\{ \mu_{I} \notin \CI_{I, \alpha}\big\} \right)
  &= \Pr\left(\sup_{I : \exists M\in\cM, I\in M} \frac{\sqrt{|I|} \left|\widehat{\mu}_I-\mu_I\right|}{\sigma} > \frac{\widehat{\sigma}}{\sigma} K_{1-\alpha} \right) \notag \\
  &= \Pr\left(\sup_{(a, b)\in \cM} \left| D_{a, b}\right| > \frac{\widehat{\sigma}}{\sigma} K_{1-\alpha} \right), \label{eq:def_D_ab}
\end{align}
where 
\begin{equation*} 
  D_{a, b} = \frac{1}{\sigma \sqrt{b-a}} \sum_{j=a+1}^b \epsilon_j
\end{equation*}
and $(a, b)\in\cM$ stands for $(a, b)\in \llbracket 0, n\rrbracket^2 : \exists M\in\cM, I=\llbracket a+1, b\rrbracket\in M$.

This suggests that $K_{1-\alpha}$ should be a quantile associated to the distribution 
of $\displaystyle \sup_{(a, b)\in \cM} \left| D_{a, b}\right|$. This is done below in \Cref{sec:cvrge_Wiener} after we establish the asymptotics of this last distribution.

\subsection{Permitted regions and models}

Constraints will be imposed on the regions we consider. In particular, for some $\delta\in (0, 1]$, we consider all intervals $I$ whose cardinality $|I|$ is as least $\delta n$. Let us define
\[
  \cM_{n,\delta} = \left\{ (a,b) \in \llbracket 0, n \rrbracket^2 : b-a \geq \delta n \right\},
\]
so that the condition on $I = \llbracket m_I+1, M_I\rrbracket$ becomes $(m_I, M_I) \in \cM_{n,\delta}$.

Let us also define
\begin{equation}\label{eq:def_E_delta}
  E_\delta = \{ (s, t)\in [0, 1]^2 : t-s \geq \delta \}.
\end{equation}

\section{Asymptotics} \label{sec:cvrge_Wiener}

Following \eqref{eq:def_D_ab}, we need to compute the quantiles of 
$\sup_{(a,b) \in \cM_{n,\delta}} \left| D_{a,b} \right|$. Our strategy is to view this quantity as a function of a well-known random process. 

Let us define, for $t\in [0, 1]$, 
\begin{equation} \label{eq:def_B_tilde}
  \widetilde{B}_t^n = \frac{1}{\sigma\sqrt{n}} \sum_{i=1}^{\lfloor t n \rfloor} (Y_i-\mu_i),
\end{equation}
and, for any $\delta\in (0, 1]$, let $G_\delta$ be defined by
\begin{equation} \label{eq:def_G_delta}
  G_\delta : \cD([0, 1]) \to \R^+,\, w \mapsto \sup_{(s,t)\in E_{\delta}} \left|\frac{w_t - w_s}{\sqrt{t-s}} \right|,
\end{equation}
where $\cD([0, 1])$ is the space of cadlag real functions on $[0, 1]$, equipped with the supremum norm, as defined for instance in \cite{Billingsley68}.

\begin{lem} \label{lem:D_ab_en_fct_de_B_tilde}
  Let $\delta\in (0, 1)$ be fixed. We have 
  \[ \sup_{(a,b) \in \cM_{n,\delta}} \left| D_{a,b} \right| = G_{\widetilde{\delta}_n}(\widetilde{B}^n), \]
  where $\delta\leq \widetilde{\delta}_n :=  \dfrac{\lceil \delta n \rceil}{n}<\delta+\dfrac 1n$.
\end{lem}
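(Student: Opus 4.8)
The plan is to prove the two inequalities separately: the easy one identifies every $D_{a,b}$ with the value, at a pair of grid points, of the ratio $\frac{w_t-w_s}{\sqrt{t-s}}$ appearing in the definition of $G_\delta$, and the delicate one reduces the continuous supremum defining $G_{\widetilde\delta_n}(\widetilde B^n)$ to those finitely many ``grid-aligned'' pairs. First, a bookkeeping remark: since $b-a$ is an integer, $b-a\ge\delta n$ is equivalent to $b-a\ge\lceil\delta n\rceil=\widetilde\delta_n n$ (in particular $\delta\le\widetilde\delta_n<\delta+1/n$ is just the definition of the ceiling), so $\cM_{n,\delta}$ is exactly the set of integer pairs $(a,b)$ with $0\le a<b\le n$ and $b/n-a/n\ge\widetilde\delta_n$. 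Given such a pair, $(s,t):=(a/n,b/n)$ lies in $E_{\widetilde\delta_n}$; since $\widetilde B^n$ at a grid point $k/n$ equals $\frac1{\sigma\sqrt n}\sum_{i=1}^{k}\epsilon_i$, one gets $\widetilde B^n_t-\widetilde B^n_s=\frac1{\sigma\sqrt n}\sum_{j=a+1}^{b}\epsilon_j$ and $\sqrt{t-s}=\sqrt{(b-a)/n}$, hence $(\widetilde B^n_t-\widetilde B^n_s)/\sqrt{t-s}=D_{a,b}$. Taking absolute values and the supremum over $\cM_{n,\delta}$ gives $\sup_{(a,b)\in\cM_{n,\delta}}|D_{a,b}|\le G_{\widetilde\delta_n}(\widetilde B^n)$. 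Equivalently, setting $\phi(s,t):=(\widetilde B^n_t-\widetilde B^n_s)^2/(t-s)$, so that $G_{\widetilde\delta_n}(\widetilde B^n)^2=\sup_{(s,t)\in E_{\widetilde\delta_n}}\phi(s,t)$, the same computation reads $\phi(a/n,b/n)=D_{a,b}^2$ for every $(a,b)\in\cM_{n,\delta}$.

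For the reverse inequality, $\phi$ is continuous on the compact set $E_{\widetilde\delta_n}$ (where $t-s\ge\widetilde\delta_n>0$) and therefore attains its maximum at some $(s^*,t^*)$. The key is to use that $\widetilde B^n$ is continuous and affine on each interval $[k/n,(k+1)/n]$: fixing $t$ and letting $s$ range inside one such interval, $\widetilde B^n_t-\widetilde B^n_s$ is affine in $s$ and $t-s$ is affine and positive, so the substitution $r=t-s$ writes $\phi(\cdot,t)$ in the form $\gamma^2/r+\zeta^2 r+\mathrm{const}$, a convex function; hence $\phi(\cdot,t^*)$ is convex on each $[k/n,(k+1)/n]$, and its maximum over the feasible set $\{s:0\le s\le t^*-\widetilde\delta_n\}$ is attained either at a grid point $\bar s$ or at $s=t^*-\widetilde\delta_n$. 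In the first case $(\bar s,t^*)$ is again a maximizer, and applying the same argument to the variable $t$ (with $s=\bar s$ fixed) — whose feasible set $\{t:\bar s+\widetilde\delta_n\le t\le 1\}$ again has grid endpoints, because $\widetilde\delta_n n\in\Z$ — yields a maximizer $(a/n,b/n)$ with both coordinates on the grid, so that $b-a=n(t-s)\ge\lceil\delta n\rceil\ge\delta n$, $(a,b)\in\cM_{n,\delta}$, and $\sup_{E_{\widetilde\delta_n}}\phi=\phi(a/n,b/n)=D_{a,b}^2$.

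It remains to handle the case where the maximum is on the boundary, \ie{} at some $(s^\dagger,s^\dagger+\widetilde\delta_n)$. There, $\phi(s^\dagger,s^\dagger+\widetilde\delta_n)=\psi(s^\dagger)^2/\widetilde\delta_n$ with $\psi(s):=\widetilde B^n_{s+\widetilde\delta_n}-\widetilde B^n_s$ defined on $[0,1-\widetilde\delta_n]$. Because $\widetilde\delta_n n\in\Z$, both $s\mapsto\widetilde B^n_s$ and $s\mapsto\widetilde B^n_{s+\widetilde\delta_n}$ change slope only at grid points, so $\psi$ is continuous and affine on each $[k/n,(k+1)/n]$ and thus attains its extrema at grid points; at a grid point $a/n$ (with $a+\lceil\delta n\rceil\le n$) one has $\psi(a/n)=\frac1{\sigma\sqrt n}\sum_{j=a+1}^{a+\lceil\delta n\rceil}\epsilon_j$, so that $\sup_{E_{\widetilde\delta_n}}\phi\le\psi(a/n)^2/\widetilde\delta_n=D_{a,\,a+\lceil\delta n\rceil}^2$ with $(a,a+\lceil\delta n\rceil)\in\cM_{n,\delta}$. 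In either case $\sup_{E_{\widetilde\delta_n}}\phi\le\sup_{(a,b)\in\cM_{n,\delta}}D_{a,b}^2$; taking square roots and combining with the easy inequality gives the claimed equality.

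The crux — the step I expect to take some care — is this reduction of a supremum over the two-dimensional continuum $E_{\widetilde\delta_n}$ to the finite grid: the convexity/endpoint reasoning must be iterated in the right order so that \emph{both} coordinates of a maximizer end up on the grid, and the active constraint $t-s=\widetilde\delta_n$ must be dealt with by the separate piecewise-linearity argument for $\psi$. That last point is exactly why the rounded radius $\widetilde\delta_n$, rather than $\delta$, has to appear in the statement: $\widetilde\delta_n n\in\Z$ is precisely what keeps all the relevant interval endpoints on the grid.
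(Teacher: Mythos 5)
Your ``easy'' inequality and the bookkeeping on $\widetilde\delta_n$ coincide with what the paper does. The gap is in the reverse direction, and it is genuine: your whole reduction of the continuous supremum to the grid rests on the claim that $\widetilde B^n$ is ``continuous and affine on each interval $[k/n,(k+1)/n]$''. That is not the process in the lemma. By \eqref{eq:def_B_tilde}, $\widetilde B^n_t$ is built from $\sum_{i\le\lfloor tn\rfloor}\epsilon_i$, so it is piecewise \emph{constant} and càdlàg --- this is why it lives in $\cD([0,1])$, and the paper's own (one-line) proof invokes precisely this piecewise constancy. For a step function, $\phi(\cdot,t)$ is not even continuous in $s$, the perspective-function convexity argument has nothing to act on, and your case analysis collapses; the same applies to the boundary function $\psi$.

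The gap is moreover not repairable for the statement as written, because for the piecewise-constant $\widetilde B^n$ the claimed \emph{equality} is false: as $s$ increases towards $(a+1)/n$ the numerator $\widetilde B^n_t-\widetilde B^n_s$ does not change while $\sqrt{t-s}$ decreases, so the ratio strictly exceeds the grid value $|D_{a,b}|$. Concretely, take $n=2$, $\delta=1/2$, $\sigma=1$, $\epsilon_1=\epsilon_2=1$: then $\sup_{(a,b)\in\cM_{2,1/2}}|D_{a,b}|=|D_{0,2}|=\sqrt2$, whereas $s\uparrow 1/2$, $t=1$ gives ratios tending to $2$, so $G_{1/2}(\widetilde B^2)=2$. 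What does hold (and is all that the proof of \Cref{thm:D_ab_asymptotics} needs) is the sandwich
\[
\sup_{(a,b)\in\cM_{n,\delta}}|D_{a,b}|\;\le\;G_{\widetilde\delta_n}(\widetilde B^n)\;\le\;\sqrt{\frac{\lceil\delta n\rceil}{\lceil\delta n\rceil-1}}\;\sup_{(a,b)\in\cM_{n,\delta}}|D_{a,b}|,
\]
obtained by noting that for $a=\lfloor sn\rfloor$, $b=\lfloor tn\rfloor$ one has $t-s>(b-a-1)/n$ and $b-a\ge\lceil\delta n\rceil$; the factor tends to $1$. Alternatively, if $\widetilde B^n$ is redefined as the polygonal (linearly interpolated) partial-sum process --- for which Donsker's theorem holds just as well --- then your convexity-plus-boundary argument is sound and does deliver the exact equality, the point $\widetilde\delta_n n\in\Z$ being used exactly where you say it is. So your argument proves a corrected version of the lemma, not the lemma about the process actually defined in \eqref{eq:def_B_tilde}; as written it asserts properties of $\widetilde B^n$ that contradict the paper's definition, and this needs to be fixed one way or the other.
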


\begin{thm}[Asymptotic distribution] \label{thm:D_ab_asymptotics}
  Suppose that the errors $(\epsilon_j)_{1\leq j\leq n}$ are centered, \iid{} with variance $\sigma^2$. Then
  \[ \sup_{(a,b) \in \cM_{n,\delta}} \left| D_{a,b} \right| %= G_{\widetilde{\delta}_n}(\widetilde{B}^n) 
  \Lcvge G_\delta(B), \]
  where $B$ is a Brownian motion on $[0, 1]$.
\end{thm}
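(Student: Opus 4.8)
The plan is to combine Donsker's invariance principle with an extended continuous mapping theorem, using \Cref{lem:D_ab_en_fct_de_B_tilde} to express the quantity of interest as a functional of a partial-sum process.

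Since $Y_i - \mu_i = \epsilon_i$ and the $\epsilon_i$ are \iid{} centered with variance $\sigma^2$, the process $\widetilde B^n$ of \eqref{eq:def_B_tilde} is precisely the rescaled partial-sum process, so Donsker's theorem yields $\widetilde B^n \weakcvge B$ in $\cD([0,1])$ (with the Skorokhod topology), where $B$ is a standard Brownian motion on $[0,1]$; in particular the limit has almost surely continuous paths, and on $\cC([0,1]) \subset \cD([0,1])$ the Skorokhod topology and the topology of uniform convergence agree. By \Cref{lem:D_ab_en_fct_de_B_tilde} we have $\sup_{(a,b)\in\cM_{n,\delta}}|D_{a,b}| = G_{\widetilde\delta_n}(\widetilde B^n)$ with $\widetilde\delta_n \to \delta$, so it suffices to show $G_{\widetilde\delta_n}(\widetilde B^n) \weakcvge G_\delta(B)$.

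I would obtain this from the mapping theorem for a converging sequence of maps (see \cite{Billingsley68}): it is enough to check that whenever $w_n \to w$ in $\cD([0,1])$ with $w \in \cC([0,1])$ and $\delta_n \to \delta \in (0,1)$, then $G_{\delta_n}(w_n) \to G_\delta(w)$. Because $w$ is continuous, $w_n \to w$ in the Skorokhod sense is equivalent to $\|w_n - w\|_\infty \to 0$, so this is a deterministic statement expressing the joint continuity of $(\delta', w) \mapsto G_{\delta'}(w)$ at points $(\delta, w)$ with $w$ continuous. For the lower bound, fix $(s,t)$ with $t - s > \delta$; then $(s,t) \in E_{\delta_n}$ for $n$ large and $(w_n)_t - (w_n)_s \to w_t - w_s$, hence $\liminf_n G_{\delta_n}(w_n) \geq |w_t - w_s|/\sqrt{t-s}$; taking the supremum over such pairs and using the continuity of $w$ to reach the boundary $t - s = \delta$, we get $\liminf_n G_{\delta_n}(w_n) \geq G_\delta(w)$. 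For the upper bound, choose $(s_n, t_n) \in E_{\delta_n}$ with $|(w_n)_{t_n} - (w_n)_{s_n}|/\sqrt{t_n - s_n} \geq G_{\delta_n}(w_n) - 1/n$; by compactness of $[0,1]^2$ pass to a subsequence along which $(s_n, t_n) \to (s,t)$, so that $t - s = \lim_n (t_n - s_n) \geq \lim_n \delta_n = \delta$ and $(s,t) \in E_\delta$; since $\delta_n$ is bounded away from $0$ and $\|w_n - w\|_\infty \to 0$ with $w$ uniformly continuous, $(w_n)_{t_n} - (w_n)_{s_n} \to w_t - w_s$, whence $G_{\delta_n}(w_n) \to |w_t - w_s|/\sqrt{t-s} \leq G_\delta(w)$ along that subsequence; as every subsequence admits such a further subsequence, $\limsup_n G_{\delta_n}(w_n) \leq G_\delta(w)$. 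The two bounds give $G_{\delta_n}(w_n) \to G_\delta(w)$, and applying the mapping theorem with $h_n = G_{\widetilde\delta_n}$, $h = G_\delta$, $X_n = \widetilde B^n$, $X = B$ concludes.

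The step I expect to be the main obstacle is the joint-continuity claim, specifically because the index set $E_{\delta'}$ over which the supremum is taken itself varies with $\delta'$: the upper bound needs a compactness/subsequence argument since the (approximate) maximizer is neither unique nor stable, while the lower bound needs the boundary case $t - s = \delta$ to be recovered through the uniform continuity of the limit path. The interplay between the Skorokhod and uniform topologies is a standard point but should be made explicit, given that $\widetilde B^n$ is a genuine step function whereas the limit $B$ is continuous; everything else (Donsker's theorem, the extended mapping theorem, and the rewriting via \Cref{lem:D_ab_en_fct_de_B_tilde}) is routine.
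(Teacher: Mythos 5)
Your proof is correct, and it differs from the paper's in the one step that is not routine: the passage from the fixed index $\delta$ to the $n$-dependent index $\widetilde{\delta}_n$. The paper keeps the two variables separate. It first obtains $G_{\delta'}(\widetilde{B}^n) \weakcvge G_{\delta'}(B)$ for each fixed $\delta'$ from Donsker's theorem and the Lipschitz continuity of $w \mapsto G_{\delta'}(w)$ (\Cref{lem:approx_B}); it then uses the monotonicity of $\delta' \mapsto G_{\delta'}(w)$ together with the a.s.\ continuity of $\delta' \mapsto G_{\delta'}(B)$ at $\delta$ (\Cref{lem:approx_delta}) to sandwich $G_{\widetilde{\delta}_n}(\widetilde{B}^n)$ between $G_{\eta}(\widetilde{B}^n)$ and $G_{\delta}(\widetilde{B}^n)$ for $\eta$ slightly above $\delta$, and concludes by comparing cumulative distribution functions via triangle inequalities. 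You instead establish the deterministic joint continuity of $(\delta', w) \mapsto G_{\delta'}(w)$ at points where $w$ is continuous and invoke the extended continuous mapping theorem for converging sequences of maps; your liminf/limsup argument (approximation from the interior $t-s>\delta$ for the lower bound, compactness of near-maximizers for the upper bound) plays the role of the explicit $\epsilon$--$\gamma$ estimate in the paper's proof of \Cref{lem:approx_delta}. Your route handles the varying index in one shot and sidesteps the pointwise CDF comparison (which is, strictly speaking, only guaranteed at continuity points of the limiting distribution function); the paper's route needs only the elementary mapping theorem for a fixed map plus monotonicity, at the price of the sandwich bookkeeping. Two cosmetic remarks: in your upper bound the near-maximizers give $\limsup_n G_{\delta_n}(w_n) \le |w_t-w_s|/\sqrt{t-s}$ along the subsequence rather than actual convergence (which you only obtain after combining with the lower bound), and the case $\delta=1$ falls outside your deterministic lemma but is trivial since then $\widetilde{\delta}_n=\delta$. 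Your explicit handling of the Skorokhod versus uniform topologies is a point the paper passes over in silence.
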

Here and in the following, $\displaystyle\Lcvge$ denotes the weak convergence.

\Cref{thm:D_ab_asymptotics} relies on Donsker's theorem, recalled below in \Cref{sec:proofs}. 
\emph{Note that we do not need the normality of $\epsilon$.} 
There exist also variants of Donsker's theorem \emph{without the independence} of the random variables $\epsilon_i$, $i\geq 1$: see for instance \cite[theorem 20.1]{Billingsley68}, where $\epsilon$ is a stationary stochastic process with strong mixing conditions.
Relaxing homoscedasticity should also be possible, for instance by adapting the time grid. 

From now on, the process 
\[ (0, 1] \to [0, +\infty),\, \delta \mapsto \sup_{(a,b) \in \cM_{n,\delta}} \left| D_{a,b} \right| = G_{\widetilde{\delta}_n}(\widetilde{B}^n) \]
is referred to as a discrete (1D) POSIR process, which stands for PoSI-Regions (as a tribute to \cite{Berk13}). The limit process appearing in \Cref{thm:D_ab_asymptotics},
\[ (0, 1] \to [0, +\infty),\, \delta \mapsto G_\delta(B) = \sup_{(s,t)\in E_{\delta}} \left|\frac{B_t - B_s}{\sqrt{t-s}} \right| \]
is called continuous, or asymptotic, (1D) POSIR process. By extension, the POSIR method denotes the following construction of simultaneous confidence intervals based on the quantiles of the continuous POSIR process.

\begin{thm}[Simultaneous asymptotic confidence intervals]  \label{thm:simultaneous_asymptotic_ICs}
  Let $B$ be a Brownian motion on $[0, 1]$. For any $\alpha\in (0, 1)$ and $\delta\in (0, 1]$, let $K_{1-\alpha,\delta}$ be the quantile of order $1-\alpha$ of the distribution of $G_\delta(B)$, so that
  \[ \Pr\left( G_\delta(B) > K_{1-\alpha,\delta} \right) = \alpha. \]
  For any interval $I=I_{a,b}=\llbracket a+1, b\rrbracket$ with $a<b\in\llbracket 0, n\rrbracket$, define the following confidence interval for $\mu_{I} = \frac{1}{b-a} \sum_{j=a+1}^b \mu_j$
  \begin{equation} \label{eq:simultaneous_asymptotic_ICs}
    \CI_{I,\alpha,\delta} = \left[ \frac{1}{b-a} \sum_{j=a+1}^b Y_j \pm K_{1-\alpha,\delta}\, \frac{\widehat{\sigma}}{\sqrt{b-a}} \right],
  \end{equation}
  where $\widehat{\sigma}$ is an estimator of $\sigma$. 
  Further assume that either $\sigma$ is known and $\widehat{\sigma}=\sigma$, or that $\displaystyle \widehat{\sigma} \mathop{\longrightarrow}\limits_{n\to\infty}^{\PP} \sigma$. Then 
  \begin{equation} \label{eq:asympt:valid:sigma:connu}
  \lim_{n \to \infty} \Pr \left(\exists (a,b) \in \cM_{n,\delta} : \mu_{I} \not \in \CI_{I,\alpha,\delta} \right) = \alpha. 
  \end{equation}
  Alternatively, if we have instead $\forall c>0,\, \Pr\big(\widehat{\sigma}<\sigma-c\big) \mathop{\longrightarrow}\limits_{n\to\infty} 0$, then 
 \begin{equation} \label{eq:asympt:valid:sigma:inconnu}
 \limsup_{n \to \infty} \Pr \left(\exists (a,b) \in \cM_{n,\delta} : \mu_{I} \not \in \CI_{I,\alpha,\delta} \right) \leq \alpha. 
 \end{equation}
\end{thm}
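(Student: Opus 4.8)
The plan is to identify the probability $p_n := \Pr\big(\exists (a,b)\in\cM_{n,\delta}: \mu_{I_{a,b}}\notin\CI_{I_{a,b},\alpha,\delta}\big)$ appearing in \eqref{eq:asympt:valid:sigma:connu} and \eqref{eq:asympt:valid:sigma:inconnu} with a tail probability of the discrete POSIR process $G_{\widetilde{\delta}_n}(\widetilde{B}^n)$, and then to pass to the limit using \Cref{thm:D_ab_asymptotics} together with a Slutsky-type argument to handle $\widehat{\sigma}$. For the first point, arguing exactly as in the computation leading to \eqref{eq:def_D_ab}: for $I=I_{a,b}$ with $(a,b)\in\cM_{n,\delta}$, one has $\mu_I\notin\CI_{I,\alpha,\delta}$ if and only if $|D_{a,b}| > (\widehat{\sigma}/\sigma)\,K_{1-\alpha,\delta}$, because $\tfrac{\sqrt{b-a}}{\sigma}(\widehat{\mu}_I-\mu_I) = D_{a,b}$ and $\widehat{\sigma}$ does not depend on $(a,b)$. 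Since $\cM_{n,\delta}$ is finite, taking the union over $(a,b)$ and invoking \Cref{lem:D_ab_en_fct_de_B_tilde} gives
\[ p_n = \Pr\Big( G_{\widetilde{\delta}_n}(\widetilde{B}^n) > \tfrac{\widehat{\sigma}}{\sigma}\,K_{1-\alpha,\delta} \Big). \]

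The next step I would carry out is to record the regularity of the law of $G_\delta(B)$. Let $F$ be its cumulative distribution function; I would show $F$ is continuous on $\R$. Since $t-s\ge\delta$ on $E_\delta$, we have $G_\delta(B)\le \tfrac{2}{\sqrt{\delta}}\sup_{[0,1]}|B|<\infty$ a.s., so $F(+\infty)=1$; and $G_\delta(B)>0$ a.s. because no Brownian path is constant, so $F$ vanishes on $(-\infty,0]$. Moreover, for every $\eta>0$ the event $\{\sup_{[0,1]}B-\inf_{[0,1]}B\le\eta\sqrt{\delta}\}$ is contained in $\{G_\delta(B)\le\eta\}$ and has positive probability (Brownian small-ball estimate), so the essential infimum of $G_\delta(B)$ is $0$. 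Now $G_\delta(B)$ is the supremum over $E_\delta\times\{-1,1\}$ of the centered Gaussian process $(s,t,\varepsilon)\mapsto\varepsilon\,(B_t-B_s)/\sqrt{t-s}$, which is a.s. bounded with variance bounded by $1$; by the classical regularity of the law of the supremum of such a process (absolute continuity on the interior of its support, from the theory of suprema of Gaussian processes, e.g. the work of Tsirelson and of Lifshits), $F$ is continuous on $(0,+\infty)$, hence on all of $\R$. In particular, for $\alpha\in(0,1)$ the quantile $K_{1-\alpha,\delta}$ is a well-defined positive number, and $\Pr(G_\delta(B)=c)=0$ for every $c\in\R$.

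Then I would conclude by sandwiching. Fix $\eta\in(0,1)$. When $\widehat{\sigma}=\sigma$ or $\widehat{\sigma}\Pcvge\sigma$, one has $\Pr(|\widehat{\sigma}/\sigma-1|>\eta)\to 0$, so on the complementary event $(1-\eta)K_{1-\alpha,\delta}\le \tfrac{\widehat{\sigma}}{\sigma}K_{1-\alpha,\delta}\le (1+\eta)K_{1-\alpha,\delta}$, whence
\[ \Pr\Big( G_{\widetilde{\delta}_n}(\widetilde{B}^n) > (1+\eta)K_{1-\alpha,\delta} \Big) - o(1) \le p_n \le \Pr\Big( G_{\widetilde{\delta}_n}(\widetilde{B}^n) > (1-\eta)K_{1-\alpha,\delta} \Big) + o(1). \]
Letting $n\to\infty$, \Cref{thm:D_ab_asymptotics} and the Portmanteau theorem (applicable since $F$ is continuous at $(1\pm\eta)K_{1-\alpha,\delta}$) give
\[ \Pr\big( G_\delta(B) > (1+\eta)K_{1-\alpha,\delta} \big) \le \liminf_n p_n \le \limsup_n p_n \le \Pr\big( G_\delta(B) > (1-\eta)K_{1-\alpha,\delta} \big), \]
and letting $\eta\downarrow 0$, continuity of $F$ at $K_{1-\alpha,\delta}$ forces both bounds to $\Pr(G_\delta(B)>K_{1-\alpha,\delta})=\alpha$, which proves \eqref{eq:asympt:valid:sigma:connu}. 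Under the weaker assumption $\forall c>0,\ \Pr(\widehat{\sigma}<\sigma-c)\to 0$, only $\Pr(\widehat{\sigma}/\sigma<1-\eta)\to 0$ is available, so only the upper half of the sandwich survives: $\limsup_n p_n\le \Pr\big(G_\delta(B)>(1-\eta)K_{1-\alpha,\delta}\big)$ for every $\eta>0$, and letting $\eta\downarrow 0$ yields $\limsup_n p_n\le\alpha$, i.e. \eqref{eq:asympt:valid:sigma:inconnu}.

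The step I expect to be the main obstacle is the regularity of $F$: converting the weak convergence in \Cref{thm:D_ab_asymptotics} into convergence of the tail probability at the fixed threshold $K_{1-\alpha,\delta}$ requires that $G_\delta(B)$ have no atom there, and this cannot be obtained by elementary estimates — it rests on the theory of suprema of Gaussian processes, together with the small-ball remark needed to exclude an atom at the essential infimum $0$. Once this is in place, the remainder is routine bookkeeping combining \Cref{lem:D_ab_en_fct_de_B_tilde}, \Cref{thm:D_ab_asymptotics}, Slutsky's lemma and the Portmanteau theorem.
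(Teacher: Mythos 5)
Your proof is correct and follows the same route as the paper, whose own proof is the single line ``It follows from \Cref{thm:D_ab_asymptotics} and Slutsky's theorem'': you identify the non-coverage event with the tail event $\{G_{\widetilde{\delta}_n}(\widetilde{B}^n) > (\widehat{\sigma}/\sigma)K_{1-\alpha,\delta}\}$ and pass to the limit by a Slutsky/Portmanteau sandwich. The one substantive detail you add that the paper leaves implicit is the atomlessness of the law of $G_\delta(B)$ (needed both for $\Pr(G_\delta(B)>K_{1-\alpha,\delta})=\alpha$ to hold exactly and for Portmanteau at the threshold), which you correctly obtain from Tsirelson-type regularity of suprema of bounded Gaussian processes plus the small-ball argument ruling out an atom at the essential infimum $0$.
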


\begin{corol}[Simultaneous asymptotic confidence intervals for the selected regions] \label{corol:simultaneous_selected_asymptotic_ICs}
  Under the same hypotheses as \Cref{thm:simultaneous_asymptotic_ICs}, consider the collection of all models $M=(I_k)_{1\leq k\leq K}$ whose intervals $I_k = \llbracket a_k+1, b_k\rrbracket$ are separate and satisfy $(a_k, b_k)\in\cM_{n,\delta}$ for all $k$. Then, for any selection procedure of $\widehat{M}=\big( \widehat{I_k} \big)_{1\leq k\leq \widehat{K}}$ among such models,
  \[ \limsup_{n \to \infty} \Pr \left(\exists k\in \llbracket 1, \widehat{K}\rrbracket : \mu_{\widehat{I_k}} \not \in \CI_{\widehat{I_k},\alpha,\delta} \right) \leq \alpha. \]
\end{corol}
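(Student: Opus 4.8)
The plan is to derive this corollary directly from \Cref{thm:simultaneous_asymptotic_ICs} via the PoSI-style union bound already recorded in \eqref{eq:PoSI_for_regions}. The key point is that a selection procedure, however it is designed, can only return a model $\widehat{M} = \big(\widehat{I_k}\big)_{1\leq k\leq \widehat{K}}$ whose intervals $\widehat{I_k} = \llbracket \widehat{a}_k+1, \widehat{b}_k\rrbracket$ all satisfy $(\widehat{a}_k, \widehat{b}_k)\in\cM_{n,\delta}$, by the very definition of the collection of admissible models in the statement. Hence, whenever some $\widehat{I_k}$ fails to cover its target, there is at least one admissible pair $(a,b)\in\cM_{n,\delta}$ — namely $(a,b) = (\widehat{a}_k, \widehat{b}_k)$ — whose interval fails as well. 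This gives the pathwise inclusion of events
\[
  \Big\{ \exists k\in \llbracket 1, \widehat{K}\rrbracket : \mu_{\widehat{I_k}} \not\in \CI_{\widehat{I_k},\alpha,\delta} \Big\}
  \;\subseteq\;
  \Big\{ \exists (a,b)\in\cM_{n,\delta} : \mu_{I_{a,b}} \not\in \CI_{I_{a,b},\alpha,\delta} \Big\},
\]
which holds for every (possibly data-driven) selection rule precisely because the right-hand side does not depend on it.

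I would then take probabilities on both sides and pass to the limit superior in $n$:
\[
  \limsup_{n\to\infty} \Pr\Big( \exists k\in\llbracket 1,\widehat{K}\rrbracket : \mu_{\widehat{I_k}} \not\in \CI_{\widehat{I_k},\alpha,\delta} \Big)
  \;\leq\;
  \limsup_{n\to\infty} \Pr\Big( \exists (a,b)\in\cM_{n,\delta} : \mu_{I_{a,b}} \not\in \CI_{I_{a,b},\alpha,\delta} \Big).
\]
Under the hypotheses inherited from \Cref{thm:simultaneous_asymptotic_ICs}, the right-hand side is at most $\alpha$: it equals $\alpha$ by \eqref{eq:asympt:valid:sigma:connu} when $\sigma$ is known with $\widehat{\sigma}=\sigma$ or when $\widehat{\sigma}\Pcvge\sigma$, and it is bounded by $\alpha$ by \eqref{eq:asympt:valid:sigma:inconnu} in the one-sided regime $\forall c>0,\ \Pr(\widehat{\sigma}<\sigma-c)\to 0$. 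In either case the claimed bound follows.

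There is essentially no analytic difficulty here: the corollary is just \Cref{thm:simultaneous_asymptotic_ICs} combined with the observation from \eqref{eq:PoSI_for_regions} that, for region selection, simultaneously controlling the coverage error over all admissible single intervals automatically controls it over every admissible model. The only subtlety worth spelling out is that the inclusion of events must be uniform over all selection procedures; this is automatic because the dominating event is the full union over the deterministic family $\cM_{n,\delta}$ and therefore ignores which model was actually selected. I would also note that the separateness constraint on the intervals of a model plays no role in the upper bound — it is relevant only for the joint interpretation of the resulting confidence intervals.
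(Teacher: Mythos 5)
Your proof is correct and follows exactly the paper's own route: the paper's proof is the one-line remark that the corollary ``comes from \eqref{eq:PoSI_for_regions} and \Cref{thm:simultaneous_asymptotic_ICs}'', which is precisely the pathwise event inclusion and limit passage you spell out. No issues.
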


\begin{rem} \label{rem:overestimationofsigma}
In \Cref{thm:simultaneous_asymptotic_ICs}, the first conclusion \eqref{eq:asympt:valid:sigma:connu} holds under a known $\sigma$ or a consistent estimator $\widehat{\sigma}$. In this case the asymptotic probability of uniform coverage is exactly $1 - \alpha$, which is ideal. 
This favorable setting may occur under structural assumptions on the mean vector $\mu$ that would be exploited in the definition of  $\widehat{\sigma}$. 

Nevertheless, if no assumption on $\mu$ is made at all, then a consistent estimator $\widehat{\sigma}$ is impossible to construct. This can be shown similarly as in Proposition A.3 in \cite{bachoc2020uniformly}. 
In contrast, an asymptotically conservative estimator $\widehat{\sigma}$ is always possible to construct, for which the second conclusion \eqref{eq:asympt:valid:sigma:inconnu} of  \Cref{thm:simultaneous_asymptotic_ICs} applies.
In this case the asymptotic probability of uniform coverage is at least $1 - \alpha$, which becomes conservative. 
An idea to construct an asymptotically conservative estimator $\widehat{\sigma}$  is simply to take the empirical standard deviation of the observation vector $Y$: heuristically, we note that the presence of the deterministic mean $\mu$ causes an overestimation of $\sigma$. 
In addition, the weaker the signal in $\mu$ (the variability of the components of $\mu$), the closer $\widehat{\sigma}$  would be to $\sigma$. 
We refer to Proposition 2.6 and Lemma C.4 in \cite{bachoc2020uniformly} for more details, and also to their Section A.2.1. 
\end{rem}

\section{Simulations} 
\label{sec:1Dsimulations}

\subsection{Quantiles} \label{sec:quantiles}

We present here estimations of the quantiles of the 1D asymptotic POSIR process $G_\delta(B)$ (with $B$ a standard Brownian motion), as defined in \eqref{eq:def_G_delta} and \Cref{thm:simultaneous_asymptotic_ICs}. 
The estimations of these quantiles rely on \Cref{thm:D_ab_asymptotics}. 

We simulated $10^6$ trajectories of a Gaussian white noise with a discretization step size of $1/n$ with $n=50000$. For each trajectory we computed $\sup_{(a,b) \in \cM_{n,\delta}} \left| D_{a,b} \right|$ for a grid of values of $\delta\in [0.005, 1]$. Then, for each value of $\delta$, we computed the empirical quantiles of order $1-\alpha$ for a grid of values of $\alpha\in [0.001, 0.5]$. 

\begin{table}[hptb]
  \begin{center}
    \small
    \begin{tabular}{|r|cccccccccc|}
      \hline
      $\alpha\backslash\delta$ & 1 & 0.9 & 0.8 & 0.7 & 0.6 & 0.5 & 0.4 & 0.3 & 0.2 & 0.1 \\ 
      \hline
      .5 & 0.675 & 1.128 & 1.338 & 1.517 & 1.687 & 1.855 & 2.029 & 2.231 & 2.485 & 2.853 \\ 
      .2 & 1.282 & 1.726 & 1.927 & 2.097 & 2.256 & 2.415 & 2.582 & 2.767 & 2.992 & 3.318 \\ 
      .1 & 1.645 & 2.086 & 2.281 & 2.444 & 2.595 & 2.746 & 2.903 & 3.077 & 3.287 & 3.588 \\ 
      .05 & 1.959 & 2.396 & 2.585 & 2.743 & 2.890 & 3.035 & 3.184 & 3.349 & 3.547 & 3.824 \\ 
      .01 & 2.577 & 3.000 & 3.189 & 3.335 & 3.469 & 3.601 & 3.736 & 3.883 & 4.059 & 4.303 \\ 
      .005 & 2.806 & 3.232 & 3.411 & 3.553 & 3.690 & 3.816 & 3.946 & 4.085 & 4.254 & 4.487 \\ 
      .001 & 3.293 & 3.715 & 3.885 & 4.009 & 4.140 & 4.251 & 4.372 & 4.502 & 4.659 & 4.879 \\
      \hline
    \end{tabular}
  \end{center}
  \caption{Some empirical quantiles for the (1D) POSIR process $G_\delta(B)$.}
  \label{table:quantiles_1D}
\end{table}

\Cref{table:quantiles_1D} shows a subset of the estimated quantiles. As expected, the quantiles are decreasing both in $\alpha$ and in $\delta$. Note that for $\delta=1$, we retrieve reasonable Monte-Carlo estimates of the quantiles of $|U|$ when $U\sim\cN(0, 1)$.

We also observe here a phenomenon already noted in \cite{Berk13} in a different context. For error levels $\alpha$ reasonably small ($\alpha\leq 5\%$), the quantiles of $G_\delta(B)$ differ from the standard Gaussian quantiles (case $\delta=1$) only by a moderate multiplicative factor (smaller than $2$ even for $\delta$ as small as $0.1$).
This suggests that the price to pay to obtain uniform (PoSI-type) guarantees is relatively small.

\subsection{Effective confidence levels} \label{subsection:effective:confidence}

We now present Monte-Carlo estimations of the effective simultaneous error levels of the confidence intervals \eqref{eq:simultaneous_asymptotic_ICs} proposed in \Cref{thm:simultaneous_asymptotic_ICs}. 
These estimations have been obtained for several values of $n$ and for various centered non-Gaussian distributions for the \iid{} random variables $\epsilon_1, \ldots, \epsilon_n$. We present below different series of such simulations. 
In each case, even if the standard deviation $\sigma$ is known since we generate the data, to compute the confidence intervals we used the usual estimator $\widehat{\sigma}$, with
\[
\widehat{\sigma}^2 = \frac{1}{n-1} \sum_{k=1}^n \left(\epsilon_k-\frac 1n \sum_{j=1}^n \epsilon_j\right)^2,
\]
since estimating $\sigma$ is needed in practice.
Our simulations tackle the case where there is no signal: here $\mu=0$.
While this choice may be surprising, its justification is that, with the estimator $\widehat{\sigma}$, any non-constant signal will introduce a positive bias on $\widehat{\sigma}$ (see \Cref{rem:overestimationofsigma}), which will increase the width of the confidence intervals and decrease their error level. Therefore, a process $Y$ with no signal is the least favorable case for evaluating the effective error levels of our confidence intervals.

On the basis of \eqref{eq:def_D_ab}, we computed 
\begin{equation} \label{eq:computed_levels}
  \frac{\sigma}{\widehat{\sigma}} \sup_{(a, b)\in \cM_{n,\delta}} \left| D_{a, b}\right| = \sup_{(a, b)\in \cM_{n,\delta}} \left| \frac{1}{\widehat{\sigma} \sqrt{b-a}} \sum_{j=a+1}^b \epsilon_j \right|
\end{equation}
for a large number of trajectories and a grid of values of $\delta$, and checked whether it was smaller than the quantiles estimated in \Cref{sec:quantiles}. A simple empirical proportion of negative answers gives a Monte-Carlo estimate of the effective simultaneous error levels of the confidence intervals. In all simulations below, we each time used $10^6$ independent trajectories of \eqref{eq:computed_levels} to compute these estimates.

The first series of simulations involve a Laplace distribution with parameter $\lambda=1$ (for the \iid{} random variables $\epsilon_k$). This provides an illustration of a case with moderately heavy tails. The effective confidence levels are presented in \Cref{fig:eff-conf-level_laplace} for various discretization levels $n$.

\begin{figure}[hptb]
  \includegraphics[width=\textwidth]{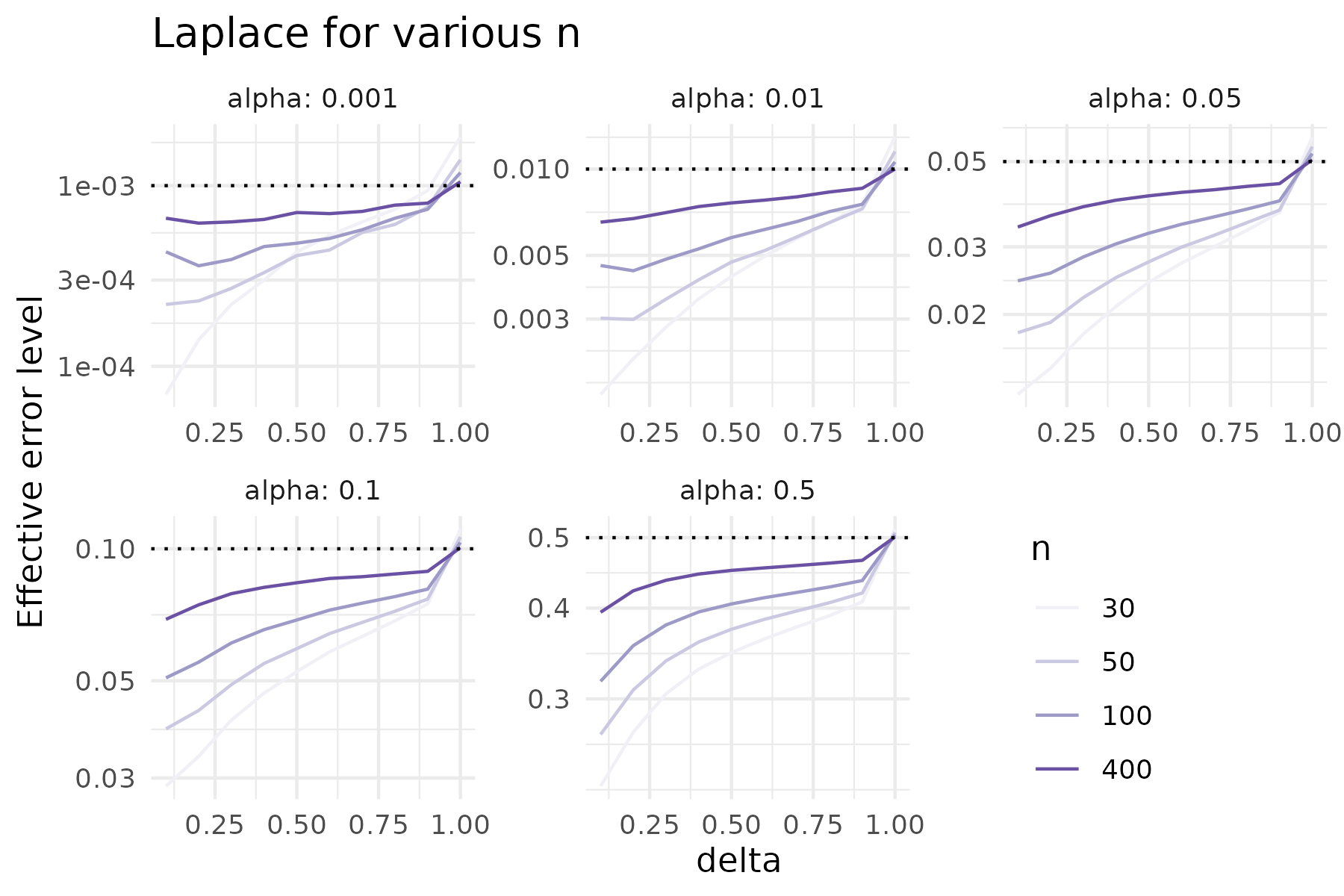}
  \vspace*{-1cm}
  \caption{Effective simultaneous error levels for the Laplace distribution with parameter $\lambda=1$ and various $n$.}
  \label{fig:eff-conf-level_laplace}
\end{figure}

First, the effective error levels are generally below the nominal error levels $\alpha$, indicating valid (but possibly conservative) confidence intervals. This can be explained by \Cref{lem:D_ab_en_fct_de_B_tilde}: the supremum in \eqref{eq:computed_levels} is a maximum over a finite number of possibilities, and it is naturally smaller than %$G_\delta(B)$, where 
when the supremum is taken over the limit continuous process.

Moreover, the gap between effective and nominal error levels $\alpha$ is larger when $\delta$ is very small, especially if $n$ is also small. This is not surprising: with $n=30$ and $\delta=0.1$ for instance, we compute the supremum over a moving window in which we may add only up to $3$ random variables $\epsilon_k$, each with a heavier tail that a normal variable, so we are very far from the Gaussian asymptotics.

Last, note that as $n$ goes larger, the gap between effective and nominal error levels reduces, which is consistent with Theorems~\ref{thm:D_ab_asymptotics} and \ref{thm:simultaneous_asymptotic_ICs}.

The second series of results involve a centered (but not symmetrized) Pareto distribution with shape parameter $2.1$. We emphasize that this corresponds to extremely heavy tails: below a shape parameter of $2$, the Pareto distribution has no quadratic moments. The results are summarized in \Cref{fig:eff-conf-level_centered-pareto_2.1}.

\begin{figure}[hptb]
  \includegraphics[width=\textwidth]{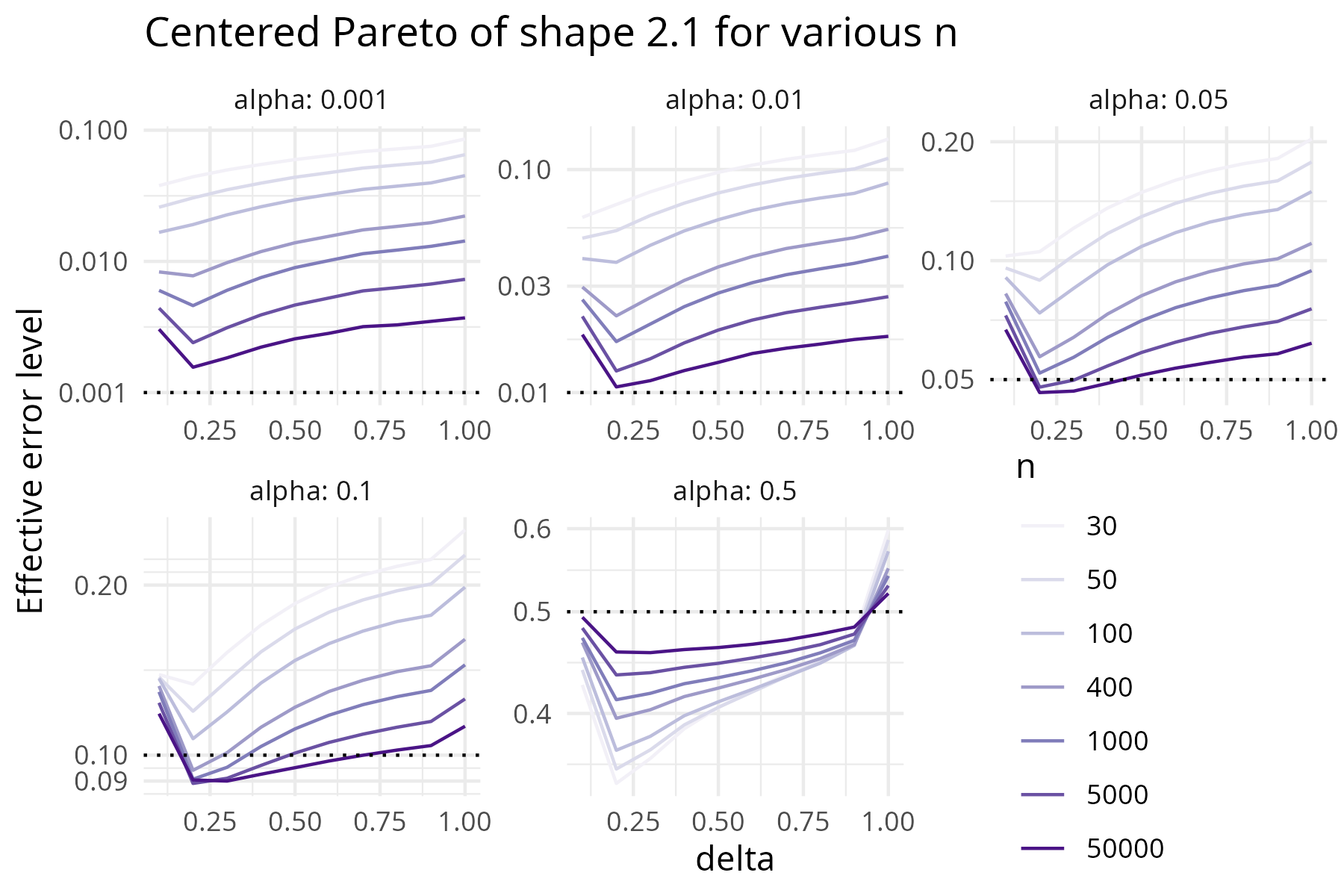}
  \vspace*{-1cm}
  \caption{Effective simultaneous error levels for the Pareto distribution with shape parameter $2.1$ and various $n$.}
  \label{fig:eff-conf-level_centered-pareto_2.1}
\end{figure}

The heavy tail of the Pareto (2.1) distribution implies that $\sup_{(a,b) \in \cM_{n,\delta}} \left| D_{a,b} \right|$ gets an heavier tail than $G_\delta(B)$. 
The Gaussian asymptotics, on which the confidence intervals of \Cref{thm:simultaneous_asymptotic_ICs} rely, are slow to appear.
Further, the \emph{relative} error between the effective error levels and the nominal level $\alpha$ is larger when $\alpha$ is very small. This is somewhat anticipated with heavy tails. 

\bigskip
We have also investigated the case of heavy-tailed but less pathologic distributions. These simulations are summarized by \Cref{fig:eff-conf-level_centered-paretos_n100}, in which we fix the discretization $n$ at $100$, and study various parameter shapes for the Pareto distribution. The results, presented in \Cref{fig:eff-conf-level_centered-paretos_n100}, show that the effective error levels get much closer to the nominal level as the tails become lighter. For completeness, we also present in Appendix (\Cref{fig:eff-conf-level_centered-pareto_3}) the result of a simulation with a a Pareto distribution with a shape parameter equal to $3$.

\begin{figure}[hptb]
  \includegraphics[width=\textwidth]{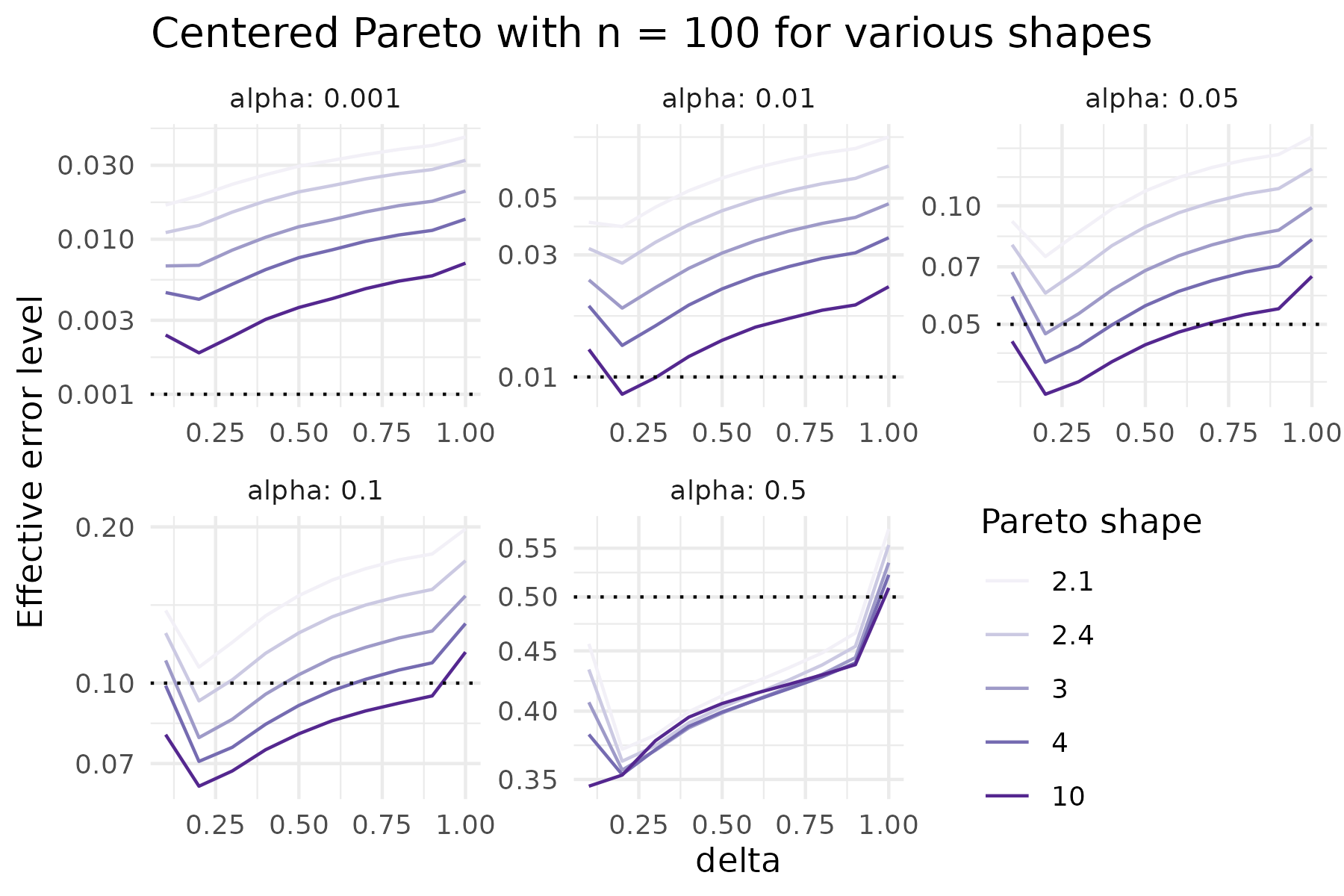}
  \vspace*{-1cm}
  \caption{Effective simultaneous error levels at $n=100$ for the Pareto distribution with various shape parameters.}
  \label{fig:eff-conf-level_centered-paretos_n100}
\end{figure}

%\bigskip

\section{Application to a segmentation problem}
\label{sec:illustration-segmentation}

We consider data that follows the model \eqref{eq:1Dmodel}, with a piecewise constant signal $\mu$ according to an unknown segmentation. We aim at providing simultaneous confidence intervals for the bin means of the signal in any given estimated segmentation, obtained by your preferred method of break detection.

Each bin or region is actually an interval. We limit ourselves to regions whose relative lengths are at least $\delta$, for a predetermined parameter $\delta\in (0, 1)$. Note that it may occur that a given breakpoint detection method returns a segmentation where some intervals are smaller than $\delta$. For such regions which are too small, we do not compute a confidence interval. 

In this section we show a practical illustration of the use of POSIR, compare it to a data splitting approach, and suggest applications that can be further built on our work.

\subsection{Simultaneous confidence intervals on segment means}
\label{sec:sim-ci-segment-means}

We use data generated by a resampling-based method introduced in \cite{PierreJean2015} in the context of DNA copy number studies in cancer research. Due to the tumoral process, cancer cells often show variation in the number of DNA copies along their genomes. This results in a piecewise-constant signal along chromosomes, each piece corresponding to a specific number of DNA copies. The method of \cite{PierreJean2015} aims to generate realistic DNA copy number profiles of cancer samples with known truth: given a simulated segmentation, the data in each segment is generated by re-sampling real copy number data in a known, segment-specific, cancer state. 

In \Cref{fig:posir-CI_true-breakpoints} we generated data with 10000 observations and 10 irregularly distributed real breakpoints, which are highlighted by red vertical lines. The location of these breakpoints is a parameter of the data generation process, which remains unknown for the subsequent data analysis. These data follow the model \eqref{eq:1Dmodel}, with a piecewise constant signal $\mu$ according to an unknown segmentation. The signal corresponds to a succession of three types of biological regions: normal regions with two DNA copies, and abnormal regions with either one or three DNA copies. For biological and technical reasons (in particular due to the presence of a mix of tumoral and normal cells), the observed signals are not calibrated around the true biological copy numbers but shifted toward lower values.

\begin{figure}[htb]
  \begin{center}
    \includegraphics[width=\textwidth]{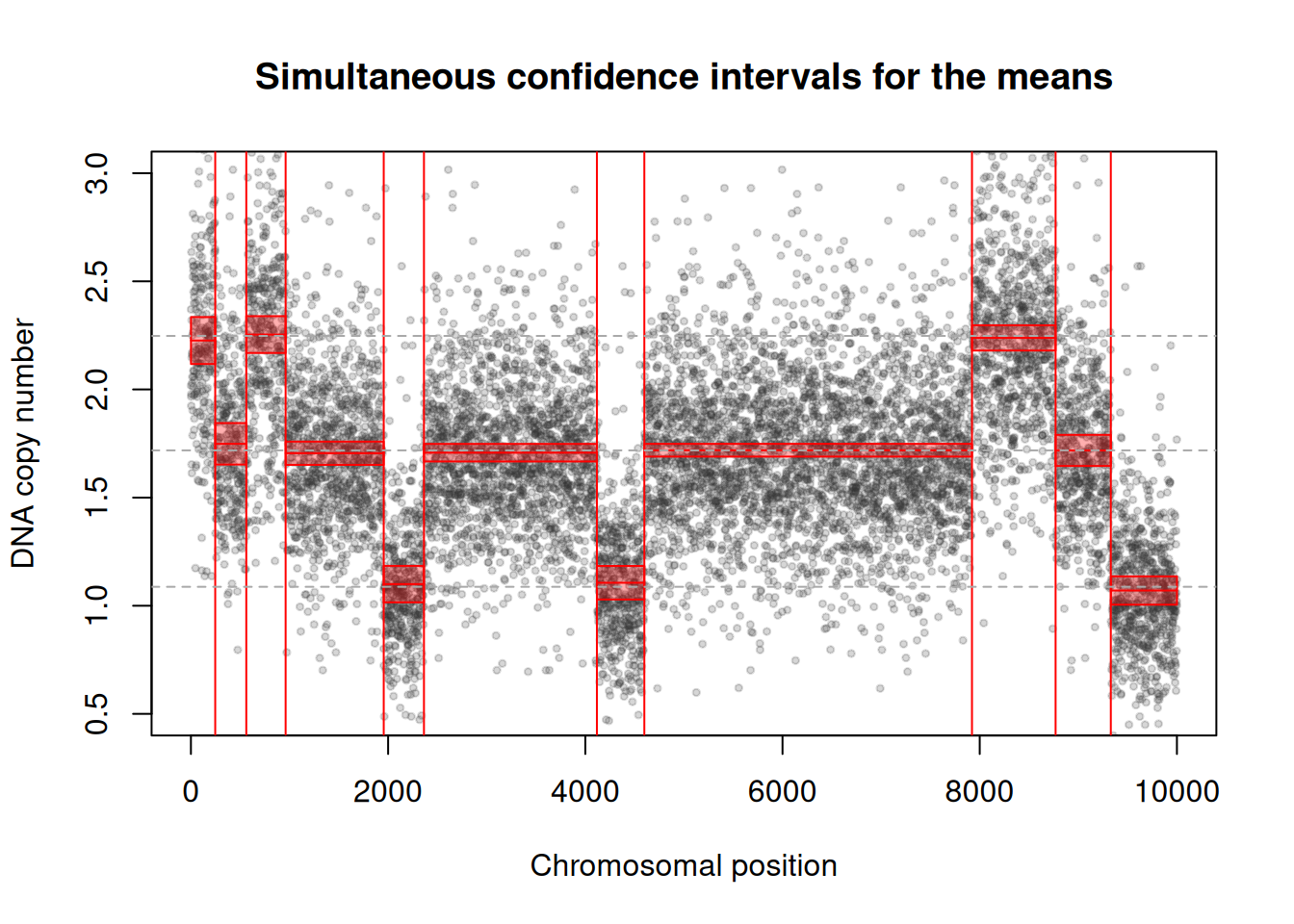}
  \end{center}
  \caption{Illustration of POSIR on DNA copy number data. Each dot corresponds to a genomic position on a chromosome. The 10 true breakpoints are displayed as vertical red lines, and the empirical means within each segment are also shown in red. All POSIR confidence intervals (displayed as red boxes) cover the true population means (horizontal dashed lines). 
} 
  \label{fig:posir-CI_true-breakpoints}
\end{figure}

The POSIR method provides simultaneous confidence intervals for the bin means of the signal for any genomic interval. The only restriction of our method is in the specification of the parameter $\delta \in (0,1)$: only intervals whose relative length is at least $\delta$ are considered. Here, we set $\delta=0.005$. In \Cref{fig:posir-CI_true-breakpoints}, these intervals are defined by the true breakpoints. As expected, all intervals cover the true population mean in this example. 

In practical applications, the true breakpoint locations are of course unknown, and so is the true number of breakpoints. For illustration, we show the results of a segmentation in 20 breakpoints, obtained by dynamic programming \cite{Rigaill2015} in \Cref{fig:posir-CI_segmentation}. Some breakpoints are false positives (e.g. around position 3200, and between positions 4500 and 8000). 
\begin{figure}[htb]
  \begin{center}
    \includegraphics[width=\textwidth]{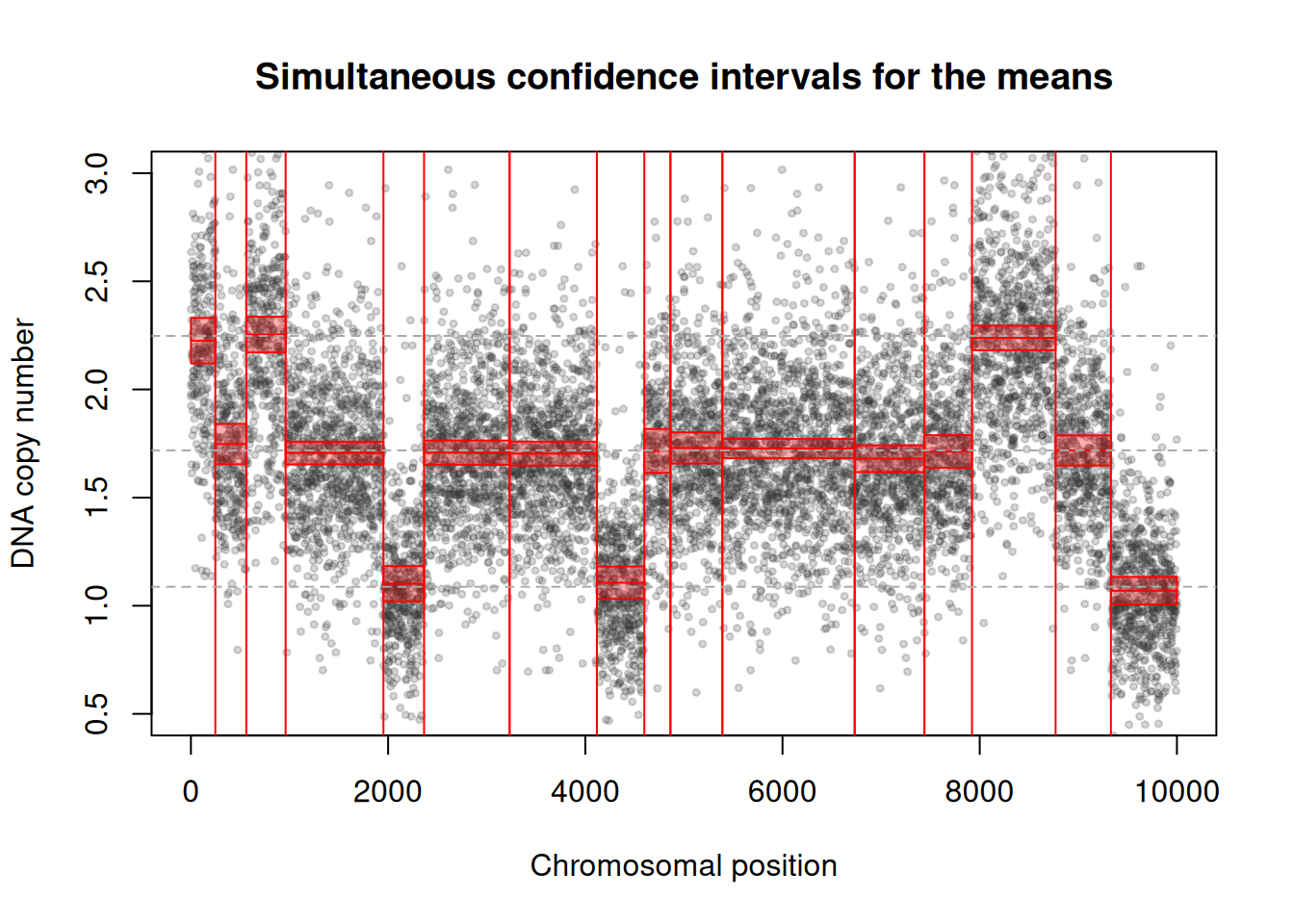}
  \end{center}
  \caption{Illustration of POSIR on DNA copy number data for a data-driven segmentation. The display is the same as in \Cref{fig:posir-CI_true-breakpoints}, except that the breakpoints have now been estimated from the data using a segmentation method, for a target number of 20 breakpoints (some of them are overlayed on the plot because they are too close on the genome). The POSIR confidence intervals are displayed as red boxes.} 
  \label{fig:posir-CI_segmentation}
\end{figure}
The simultaneous confidence intervals shown in  \Cref{fig:posir-CI_segmentation} were computed on the estimated segmentation. Here again, all intervals cover the true population mean (dashed gray lines). Moreover, the above-mentioned false positive breakpoints are easily detected by the fact that the confidence intervals associated to the flanking segments overlap. 

We note that, by construction, the confidence intervals displayed in both plots are all simultaneously valid (over regions with relative length at least $\delta$). This implies that the detection of false positive breakpoints above can come with theoretical guarantees. More precisely, consider a test procedure, on the relevance of a breakpoint, that concludes that the breakpoint is relevant if the two corresponding regions before and after the breakpoint have non-ovelapping confidence intervals. Then, if in reality a breakpoint is not relevant, that is the two regions have the same unknown average signal, the probability that the test indicates relevance is smaller than the probability that one of the two confidence intervals does not contain its target and is thus asymptotically smaller than $\alpha$. Moreover, the relevance of multiple breakpoints can be simultaneously tested, with $\alpha$ still the common type I error. This point will be further discussed in \Cref{sec:testing}.

\subsection{Comparison to a data splitting procedure}

An alternative to our POSIR method could be the following. 
The observations $y_k$ for even values of $k$ can be used to select a segmentation with your favorite method, then the independent observations $y_k$ for odd values of $k$ can be used to compute the confidence intervals.

Suppose that the segmentation procedure detected $L-1$ breakpoints, which means there are $L$ regions (intervals) in which we aim to establish simultaneous confidence intervals.
Let $I \subset \llbracket 1, n\rrbracket$ denote one of these regions. Since only the observations in even positions are used in the segmentation, we assume that its length $|I|$ is even too. At the end of this splitting procedure, the confidence interval of error level $\alpha$ for the mean value $\mu_I$ of the signal in interval $I$ is given by
\[ \CI_{I, \alpha}^{\text{split}} = \left[\widehat{\mu}_I^{\text{split}} \pm q_{1-\frac{\alpha}{2 L}} \frac{\sqrt{2} \widehat{\sigma}}{\sqrt{|I|}} \right],\, \text{with } \widehat{\mu}_I^{\text{split}} = \frac{2}{|I|} \sum_{k\in\,I,\, k\text{ odd}} y_k, \]
where $\widehat{\sigma}$ is some estimate of $\sigma$ and, following Bonferroni's rule, $q_{1-\frac{\alpha}{2 L}}$ is the quantile of level $1-\frac{\alpha}{2 L}$ of  
a Normal or a Student distribution. 
Note that the length $|I|$ is replaced by $|I|/2$ because only one observation out of two was used in the estimation.

We now provide a comparison with our proposed confidence intervals \eqref{eq:simultaneous_asymptotic_ICs}. The centerings are not the same but are similar. To simplify, we assume that the same estimate $\widehat{\sigma}$ is used on both sides, and that all selected regions are large enough that we can use the Normal distribution to compute the quantiles $q_{1-\frac{\alpha}{2 L}}$. In this setting, we can compare the diameters of the confidence intervals on both sides simply by computing their ratio 
\begin{equation} \label{eq:lengthratio}
    \frac{\sqrt{2} q_{1-\frac{\alpha}{2 L}}}{K_{1-\alpha,\delta}}.
\end{equation}
If this ratio is smaller than $1$, then the confidence intervals obtained by the splitting procedure are shorter than the ones obtained by POSIR. Since a given segmentation can lead to at most $L\leq 1/\delta$ regions of relative length $|I|/n \geq \delta$, we computed in \Cref{fig:ratios} the ratio \eqref{eq:lengthratio} with $\delta=c/L$ for various values of $L$ and of $c\in{}(0,1]$. 
\begin{figure}[htb]
  \begin{center}
    \includegraphics[width=\textwidth]{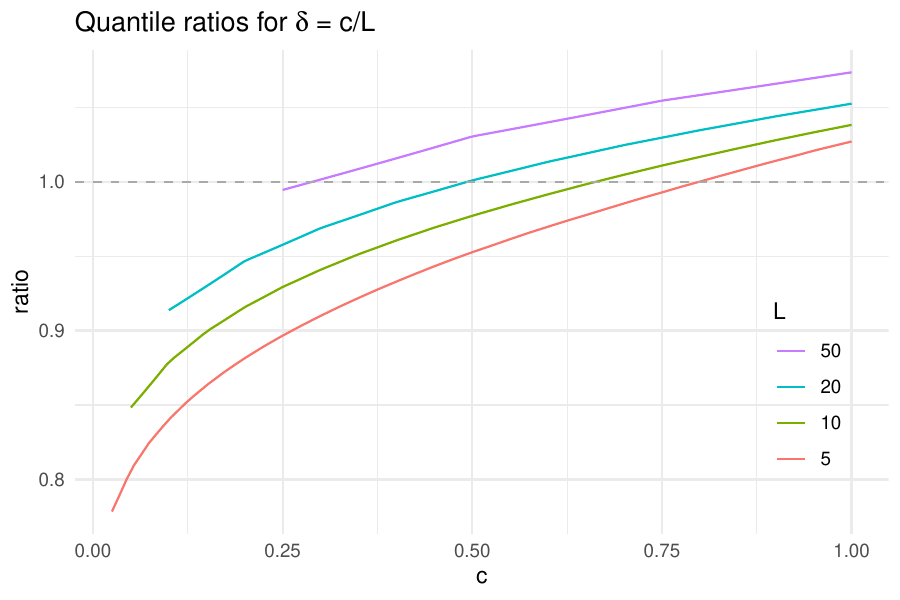}
  \end{center}
  \caption{Ratios of the diameter of confidence intervals in the splitting procedure over the diameter in POSIR. The POSIR confidence intervals are smaller whenever this ratio exceeds 1.} \label{fig:ratios}
\end{figure}

Overall, the splitting procedure for a given segmentation and POSIR provide intervals of comparable width, since all ratios are relatively close to 1. The ratios increase with the number of segments: this reflects the fact that the width of the data splitting intervals is severely affected by the number of segments. However, the main strength of the POSIR method is that the obtained confidence intervals are \emph{simultaneously valid across all possible segmentations}, for any region whose relative length is at least $\delta$ (for a pre-specified value of $\delta$). Thus, POSIR permits for instance to compare several competing segmentations, perform model selection and tests, etc. None of these is possible with the spitting procedure, which relies on establishing first a single segmentation, and whose interval sizes explicitly depend on the number of segments $L$. Given this versatility of our procedure, it is remarkable that in \Cref{fig:ratios} where only one segmentation is considered, the POSIR method remains competitive and even outperforms the data splitting approach when the segmentation selects many regions of roughly homogeneous size, close to the previously chosen $\delta$. 
    
\subsection{Further exploiting POSIR} \label{sec:testing}

We can further benefit from the validity of our simultaneous confidence intervals across all segmentations. For instance, one could cherry pick a region according to a given criterion on the associated confidence interval, among all regions that are large enough with respect to $\delta$. 
Many examples could be devised, so we limit ourselves to the following one, where the idea of cherry picking is further adapted to construct a statistical testing procedure.

Consider one-dimensional data $y=(y_1, \ldots, y_n)$, supposed to be the observations of random variables $Y=(Y_1, \ldots, Y_n)$. We want to test the null hypothesis $H_0:$ ``the variables $Y_i$, $1\leq i\leq n$, are \iid{} with $\E[Y_i]=\mu_0$'' versus any alternative $H_1$, for some known value $\mu_0$. 
Fix some $\delta\in{} (0,1)$, and consider all intervals $I$ whose relative length is at least $\delta$. Consider the following statistic
\[ T_\delta = \sup_{I: |I|\geq n\delta} \frac{\sqrt{|I|} \left|\widehat{\mu}_I -\mu_0\right|}{\widehat{\sigma}}, \]
whose distribution under $H_0$ is approximately the one of $G_\delta(B)$ if $n$ is large enough, by Theorem~\ref{thm:D_ab_asymptotics}.
This permits to compute the asymptotic p-value as 
\[ \PP(G_\delta(B) > T_\delta), \]
an estimate of which is available using the function \textrm{pposir()} of the \textrm{R} package \href{\posirpackage}{\textrm{posir}}.

\section{Proofs} \label{sec:proofs}

We first recall Donsker's theorem, which can be found for instance in \cite[theorem 16.1]{Billingsley68}.

\begin{thm}[Donsker's theorem] \label{thm:Donsker's}
  Suppose the random variables $\epsilon_k$, $k\geq 1$, are independent and identically distributed with mean $0$ and finite, positive variance $\sigma^2$. Let the random process $\widetilde{B}^n$ be defined as in \eqref{eq:def_B_tilde}. Its distribution weakly converges to Wiener measure. 
  
  Put another way, if $B$ is a standard Brownian motion on $[0, 1]$, then
  \[ \widetilde{B}^n \Lcvge B, \]
  where $\weakcvge$ denotes the weak convergence in $\cD([0, 1])$.
\end{thm}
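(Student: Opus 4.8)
This is a classical statement; in the paper it suffices to cite \cite[Theorem 16.1]{Billingsley68}, but for completeness here is the route I would follow. The plan is the usual two-step argument for weak convergence in $\cD([0,1])$: first establish convergence of the finite-dimensional distributions, then tightness of the sequence $(\widetilde{B}^n)_{n\ge 1}$; the two combined give the claim via Prokhorov's theorem.

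\emph{Finite-dimensional distributions.} Fix $0\le t_1<\dots<t_m\le 1$. The successive increments $\widetilde{B}^n_{t_j}-\widetilde{B}^n_{t_{j-1}}=\frac{1}{\sigma\sqrt n}\sum_{i=\lfloor nt_{j-1}\rfloor+1}^{\lfloor nt_j\rfloor}\epsilon_i$ are independent, since they involve disjoint blocks of the \iid{} sequence $(\epsilon_i)$, and each is a normalized sum of $\lfloor nt_j\rfloor-\lfloor nt_{j-1}\rfloor\sim n(t_j-t_{j-1})$ centered \iid{} variables of variance $\sigma^2$. The classical central limit theorem gives that each increment converges in distribution to $\cN(0,t_j-t_{j-1})$, and by independence the vector of increments converges to the corresponding vector of increments of $B$; a linear change of coordinates then yields that $(\widetilde{B}^n_{t_1},\dots,\widetilde{B}^n_{t_m})$ converges in distribution to $(B_{t_1},\dots,B_{t_m})$.

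\emph{Tightness.} I would invoke the modulus-of-continuity criterion for $\cD([0,1])$ (for instance \cite[Theorem 13.2]{Billingsley68}): it suffices to show that for every $\epsilon>0$, $\lim_{\eta\downarrow 0}\limsup_{n\to\infty}\Pr\big(\sup_{|s-t|\le\eta}|\widetilde{B}^n_t-\widetilde{B}^n_s|\ge\epsilon\big)=0$. Partitioning $[0,1]$ into $O(1/\eta)$ intervals of length $\eta$ and using the triangle inequality reduces this to bounding the maximal fluctuation of the random walk $S_k=\epsilon_1+\dots+\epsilon_k$ over a window of $\approx n\eta$ consecutive steps. Kolmogorov's maximal inequality, which requires only a finite second moment, bounds $\Pr\big(\max_{k\le n\eta}|S_k|\ge\epsilon\sigma\sqrt n\big)$ by a constant times $\eta/\epsilon^2$, uniformly in $n$; summing over blocks gives only an $O(1/\epsilon^2)$ bound, so the genuine decay in $\eta$ has to be extracted through the finer blocking/chaining argument of \cite{Billingsley68}. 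This tightness step, and in particular making the maximal-inequality bookkeeping uniform in $n$ while assuming only two moments, is the part I expect to be the main obstacle.

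\emph{Conclusion.} Finite-dimensional convergence plus tightness give, via Prokhorov's theorem, that $(\widetilde{B}^n)$ is relatively compact and that every subsequential weak limit has the finite-dimensional distributions of $B$ and (thanks to tightness) puts full mass on $C([0,1])$; hence it is Wiener measure, and the whole sequence converges. As an alternative that bypasses the delicate tightness estimate, one can use the Skorokhod embedding: realize $S_k=W(\tau_k)$ for a standard Brownian motion $W$ and nondecreasing stopping times with \iid{} increments of mean $\sigma^2$; the law of large numbers gives $\tau_{\lfloor nt\rfloor}/n\to\sigma^2 t$ uniformly on $[0,1]$ almost surely, and combining this with the uniform continuity of $W$ on compact sets and Brownian scaling shows $\sup_{t\in[0,1]}|\widetilde{B}^n_t-\widehat{B}_t|\to 0$ in probability along this coupling, with $\widehat{B}$ a standard Brownian motion; this yields the stated convergence in distribution, the obstacle now being the uniform control of the stopping times and of the $O(1/n)$ error from the floor function.
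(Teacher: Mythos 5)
The paper does not prove this statement: it is recalled verbatim from \cite[Theorem 16.1]{Billingsley68}, so your opening remark that a citation suffices already matches the paper's approach exactly. Your sketch of the classical proof (finite-dimensional convergence via the CLT plus tightness, or alternatively Skorokhod embedding) is the standard route taken in the cited reference, and you correctly flag the one genuine difficulty --- that Kolmogorov's maximal inequality alone gives only an $O(1/\epsilon^2)$ bound with no decay in $\eta$, so a finer blocking argument is needed --- which is precisely what the cited theorem supplies, so nothing further is required here.
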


From there, we prove the different results stated in \Cref{sec:cvrge_Wiener}.

\begin{proof}[Proof of \Cref{lem:D_ab_en_fct_de_B_tilde}] 
  First, remark that, if $k\in \llbracket 1, n\rrbracket$, then
  \[ \widetilde{B}_{k/n}^n = \frac{1}{\sigma\sqrt{n}} \sum_{i=1}^k \epsilon_i. \]
  So, if $a$ and $b$ are integers in $\llbracket 0, n\rrbracket$ such that $a<b$, then 
  \[ D_{a, b} = \frac{\widetilde{B}_t^n-\widetilde{B}_s^n}{\sqrt{t-s}} \text{ for $t=b/n$ and $s=a/n$.} \]
  Now, $a$ and $b$ have to be integers, so 
  \[ b-a \geq n\delta \Longleftrightarrow b-a \geq \lceil n\delta \rceil =: n \widetilde{\delta}_n. \]
  Since $\widetilde{B}^n$ is piecewise constant (càdlàg), see \eqref{eq:def_B_tilde}, this leads to 
  \[ \sup_{(a,b) \in \cM_{n,\delta}} \left| D_{a,b} \right| = G_{\widetilde{\delta}_n}(\widetilde{B}^n) = \sup_{0\leq s < t \leq 1 : t-s\geq\widetilde{\delta}_n} \left| \frac{\widetilde{B}_t^n-\widetilde{B}_s^n}{\sqrt{t-s}} \right|. \qedhere \]
\end{proof}

\begin{lem} \label{lem:approx_B}
  For any $\delta\in (0, 1]$, the function $G_\delta$ defined in \eqref{eq:def_G_delta} is continuous on $\cD([0, 1])$.
\end{lem}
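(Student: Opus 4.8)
The plan is to show that $G_\delta$ is Lipschitz on $\cD([0,1])$ with respect to the supremum norm, which immediately gives continuity. Fix $\delta \in (0,1]$ and let $w, w' \in \cD([0,1])$. For any pair $(s,t) \in E_\delta$ we have $t - s \geq \delta$, so $1/\sqrt{t-s} \leq 1/\sqrt{\delta}$. Writing $v = w - w'$, the triangle inequality gives
\[
  \left| \frac{w_t - w_s}{\sqrt{t-s}} \right|
  \leq \left| \frac{w'_t - w'_s}{\sqrt{t-s}} \right| + \left| \frac{v_t - v_s}{\sqrt{t-s}} \right|
  \leq \left| \frac{w'_t - w'_s}{\sqrt{t-s}} \right| + \frac{2 \|w - w'\|_\infty}{\sqrt{\delta}}.
\]
Taking the supremum over $(s,t) \in E_\delta$ on both sides yields $G_\delta(w) \leq G_\delta(w') + 2\delta^{-1/2} \|w - w'\|_\infty$, and by symmetry $|G_\delta(w) - G_\delta(w')| \leq 2\delta^{-1/2}\|w - w'\|_\infty$. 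Hence $G_\delta$ is $2\delta^{-1/2}$-Lipschitz, and in particular continuous on $\cD([0,1])$.

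The one point that deserves a little care — and which is really the only obstacle — is justifying that the supremum defining $G_\delta$ is well-defined (finite) and that the interchange of the two suprema above is legitimate; but this is routine, since for $w \in \cD([0,1])$ the function $(s,t) \mapsto w_t - w_s$ is bounded by $2\|w\|_\infty$ on the compact-by-$\delta$-bounded set $E_\delta$, so $G_\delta(w) \leq 2\delta^{-1/2}\|w\|_\infty < \infty$, and the manipulation of suprema is the standard fact that $\sup_x (f(x) + g(x)) \leq \sup_x f(x) + \sup_x g(x)$. No continuity or càdlàg property of $w$ beyond boundedness is actually needed for the argument; the càdlàg structure only matters elsewhere (e.g. in Lemma \ref{lem:D_ab_en_fct_de_B_tilde}).

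Note finally that this Lipschitz bound, with constant blowing up like $\delta^{-1/2}$ as $\delta \to 0$, is exactly what one expects and is harmless here since $\delta$ is fixed. Combined with Donsker's theorem (Theorem \ref{thm:Donsker's}) and the continuous mapping theorem, this continuity of $G_\delta$ is what will drive the proof of Theorem \ref{thm:D_ab_asymptotics}, modulo handling the discrepancy between $\widetilde{\delta}_n$ and $\delta$ coming from Lemma \ref{lem:D_ab_en_fct_de_B_tilde}.
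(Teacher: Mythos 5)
Your proof is correct and follows essentially the same route as the paper's: both establish that $G_\delta$ is $2\delta^{-1/2}$-Lipschitz for the supremum norm via the pointwise triangle-inequality bound on $E_\delta$ and then symmetrize. The additional remarks on finiteness of the supremum are accurate but not needed beyond what the paper states.
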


\begin{proof}
  Let $v$ and $w$ be two elements of $\cD([0, 1])$. Then, for any $(s, t)\in E_\delta$, we have
  \begin{gather*}
    \left|\frac{v_t - v_s}{\sqrt{t-s}} \right| \leq \left|\frac{w_t - w_s}{\sqrt{t-s}} \right| + \frac{2\|v-w\|_\infty}{\sqrt{\delta}}, \\
    G_\delta(v) \leq G_\delta(w) + \frac{2\|v-w\|_\infty}{\sqrt{\delta}}.
  \end{gather*}
  By exchanging $v$ and $w$, we get 
  \[ \big| G_\delta(v) - G_\delta(w) \big| \leq \frac{2}{\sqrt{\delta}} \|v-w\|_\infty. \qedhere \]
\end{proof}

\begin{lem} \label{lem:approx_delta}
 \begin{enumerate}
   \item For any trajectory $w\in \cD([0, 1])$, the function 
   \[ (0, 1] \to \R^+, \delta \mapsto G_\delta(w) \]
   is non-increasing.
   \item Further, if $w$ is continuous, the function $\delta \mapsto G_\delta(w)$ is 
   continuous. 
 \end{enumerate}
\end{lem}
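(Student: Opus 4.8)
The plan is to handle the two assertions separately; the first is immediate from the definitions, and for the second I would reduce to the monotonicity of the first assertion plus a one-sided estimate obtained by ``enlarging intervals''.

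For the first assertion, one reads off \eqref{eq:def_E_delta} and \eqref{eq:def_G_delta} that $0<\delta\le\delta'\le 1$ implies $E_{\delta'}\subseteq E_\delta$, so the supremum defining $G_{\delta'}(w)$ is taken over a subset of the one defining $G_\delta(w)$; hence $G_{\delta'}(w)\le G_\delta(w)$. No regularity of $w$ is needed (and when $w$ is bounded, e.g.\ c\`adl\`ag, each $G_\delta(w)\le 2\|w\|_\infty/\sqrt\delta$ is finite).

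For the second assertion, since $\delta\mapsto G_\delta(w)$ is non-increasing by the first assertion, continuity on $(0,1]$ amounts to left- and right-continuity at every $\delta_0\in(0,1]$, with right-continuity vacuous at $\delta_0=1$. The key elementary fact I would use is an enlargement property: because $\delta'\le 1$, any $[s,t]\subseteq[0,1]$ with $t-s\le\delta'$ can be enlarged to some $[s',t']\subseteq[0,1]$ with $[s,t]\subseteq[s',t']$ and $t'-s'=\delta'$, and then $|s-s'|\le\delta'-(t-s)$ and $|t-t'|\le\delta'-(t-s)$ --- simply because the room $1-(t-s)$ available to expand is at least the amount $\delta'-(t-s)$ that is needed. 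Fix $\delta_0$ and let $\omega$ be the (finite, vanishing at $0$) modulus of continuity of $w$ on $[0,1]$. For left-continuity, for $0<\delta<\delta_0$ I would split $E_\delta=E_{\delta_0}\sqcup\{(s,t):\delta\le t-s<\delta_0\}$, so that $G_\delta(w)$ is the maximum of $G_{\delta_0}(w)$ and the supremum of $|w_t-w_s|/\sqrt{t-s}$ over the second piece; enlarging such an $[s,t]$ to $[s',t']$ of length $\delta_0$ gives $|w_t-w_s|\le|w_{t'}-w_{s'}|+2\omega(\delta_0-\delta)$, hence
\[
  \frac{|w_t-w_s|}{\sqrt{t-s}}\le\sqrt{\frac{\delta_0}{\delta}}\,G_{\delta_0}(w)+\frac{2\omega(\delta_0-\delta)}{\sqrt\delta},
\]
using $t-s\ge\delta$ and $|w_{t'}-w_{s'}|\le\sqrt{\delta_0}\,G_{\delta_0}(w)$. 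Taking the supremum yields $G_{\delta_0}(w)\le G_\delta(w)\le\sqrt{\delta_0/\delta}\,G_{\delta_0}(w)+2\omega(\delta_0-\delta)/\sqrt\delta$, and letting $\delta\uparrow\delta_0$ forces $G_\delta(w)\to G_{\delta_0}(w)$.

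Right-continuity (at $\delta_0<1$) is symmetric: for $\delta_0<\delta\le 1$, write $G_{\delta_0}(w)$ as the maximum of $G_\delta(w)$ and the supremum of $|w_t-w_s|/\sqrt{t-s}$ over $\{(s,t):\delta_0\le t-s<\delta\}$, enlarge each such $[s,t]$ to length $\delta$, and obtain $G_{\delta_0}(w)\le\sqrt{\delta/\delta_0}\,G_\delta(w)+2\omega(\delta-\delta_0)/\sqrt{\delta_0}$; since also $G_\delta(w)\le G_{\delta_0}(w)<\infty$, rearranging and letting $\delta\downarrow\delta_0$ again forces $G_\delta(w)\to G_{\delta_0}(w)$. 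I expect the only delicate points to be the bookkeeping in the enlargement step (keeping track of which way each inequality goes, and of the harmless factors $\sqrt{\delta_0/\delta}$) and remembering that at the right endpoint $\delta_0=1$ there is nothing to prove on the right; the rest is routine. As an alternative to the explicit estimate one could argue by compactness --- each $E_\delta$ is compact, $\bigcap_{\delta<\delta_0}E_\delta=E_{\delta_0}$, and $(s,t)\mapsto|w_t-w_s|/\sqrt{t-s}$ is continuous on $E_\delta$ for $\delta>0$ when $w$ is continuous --- by extracting a convergent subsequence of near-maximizers, but the direct bound above seems shorter and makes the role of the constraint $\delta\le 1$ transparent.
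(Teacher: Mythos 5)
Your proof is correct and follows essentially the same route as the paper's: monotonicity from the inclusion $E_{\delta'}\subseteq E_\delta$, and continuity from the uniform continuity of $w$ on $[0,1]$ combined with enlarging a sub-$\delta'$-length interval to one of length exactly $\delta'$ (the paper phrases this as uniform continuity of $\delta\mapsto G_\delta(w)$ on each $(\eta,1]$ and works with a compactness-supplied maximizer, whereas you argue pointwise left/right continuity directly with the modulus of continuity, but the estimates are the same). The bookkeeping in your enlargement step checks out, including the factors $\sqrt{\delta_0/\delta}$ and the bound $|w_{t'}-w_{s'}|\le\sqrt{\delta_0}\,G_{\delta_0}(w)$.
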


\begin{proof}
  \begin{enumerate}
    \item $G_\delta$ is defined as a supremum over $E_\delta$. Suppose $0 < \delta \leq \eta \leq 1$. Then we have $E_\eta \subset E_\delta$, and $G_\eta(w) \leq G_\delta(w)$.
    \item 
    We prove that, for any $\eta\in (0, 1]$, the function $\delta \mapsto G_\delta(w)$ is uniformly continuous on $(\eta, 1]$. \textit{A fortiori}, it is continuous on $(0, 1]$.

    So, let  $\eta\in (0, 1]$ and $\epsilon>0$. Since $w$ is continuous on the compact set $[0, 1]$, it is also uniformly continuous: 
    \[ \exists \gamma >0, \forall s, t \in [0, 1], |t-s|\leq \gamma \Rightarrow |w_t-w_s| \leq \frac{\sqrt{\eta}}{4} \epsilon. \]
    Moreover, since 
    \[ \lim_{t\to 0^+} \sqrt{\frac{\eta}{\eta+t}} = 1, \]
    let us choose $\gamma$ small enough so that 
    \[ \left| \sqrt{\frac{\eta}{\eta+\gamma}} -1 \right| \leq \frac{\sqrt{\eta}\, \epsilon}{4 \|w\|_\infty}. \]

    Now, let $\delta \leq \mu \in (\eta, 1)$ such that $\mu-\delta\leq \gamma$. 
    Since $w$ is continuous, the function
    \[ E_\delta \to \R,\, (s, t) \mapsto \left|\frac{w_t - w_s}{\sqrt{t-s}} \right| \]
    is also continuous, and $E_\delta$ is a compact set. Therefore, there exists a couple $(u, v)\in E_\delta$ such that
    \[ G_\delta(w) = \left|\frac{w_v - w_u}{\sqrt{v-u}} \right|. \]
    Note that $u<v$ since $(u, v)\in E_\delta$.

    Suppose $v-u>\mu$. Then 
    \[ G_\delta(w) = \left|\frac{w_v - w_u}{\sqrt{v-u}} \right| \leq G_{\mu}(w) \leq G_\delta(w). \]

    Otherwise, we have $\delta\leq v-u\leq \mu$. Let $\widetilde{u}=\max(0, v-\mu)$, and $\widetilde{v}=\widetilde{u}+\mu=\max(v, \mu)$. This ensures that $\widetilde{u}\in [0, u]$, $\widetilde{v}\in [v, 1]$, and $\max(u-\widetilde{u}, \widetilde{v}-v)\leq \mu-\delta\leq \gamma$. Now, we have
    \begin{align*}
      G_\delta(w) \geq G_\mu(w) &\geq \left|\frac{w_{\widetilde{v}} - w_{\widetilde{u}}}{\sqrt{{\widetilde{v}}-{\widetilde{u}}}} \right| %\\ &
      \geq \frac{|w_v - w_u|}{\sqrt{\mu}} - \frac{2}{\sqrt{\eta}} \sup_{|t-s|\leq \gamma} |w_t-w_s| \\ &
      \geq \sqrt{\frac{\delta}{\delta+\gamma}}\, G_\delta(w)-\frac{\epsilon}{2}, 
    \end{align*}
    since $\mu\geq\delta\geq\eta$. 
    Further, we have 
    \[ \sqrt{\frac{\delta}{\delta+\gamma}} \geq \sqrt{\frac{\eta}{\eta+\gamma}} \geq 1 - \left| \sqrt{\frac{\eta}{\eta+\gamma}} -1 \right|, \]
    so that 
    \[ \sqrt{\frac{\delta}{\delta+\gamma}}\, G_\delta(w) \geq G_\delta(w) - \frac{\sqrt{\eta}\, G_\delta(w)}{4 \|w\|_\infty}\, \epsilon \geq G_\delta(w) - \frac{\epsilon}{2}, \]
    since $G_\delta(w) \leq \frac{2 \|w\|_\infty}{\sqrt{\delta}}$ and $\delta\geq\eta$.
    
    At the end, we obtain the expected $|G_\mu(w) - G_\delta(w)|\leq \epsilon$. \qedhere
  \end{enumerate}
\end{proof}

\begin{proof}[Proof of \Cref{thm:D_ab_asymptotics}]
  Let $B$ be a Brownian motion on $[0, 1]$. From \Cref{thm:Donsker's} and \Cref{lem:approx_B}, we already have $\displaystyle G_{\delta}(\widetilde{B}^n) \Lcvge G_\delta(B)$ for any $\delta\in (0, 1]$. 
  
  In order to obtain the same asymptotics for $G_{\widetilde{\delta}_n}(\widetilde{B}^n)$, it is sufficient to prove that its cumulative distribution function point-wise converges to the one of $G_\delta(B)$.
  
  Let us first consider the case $\delta=1$. Then $\widetilde{\delta}_n=\delta$ and \Cref{thm:D_ab_asymptotics} holds.
  
  Let us now fix some $\delta\in (0, 1)$, as well as $t\in\R$ and $\epsilon>0$. 
  By \Cref{lem:approx_delta}, \[ G_{\eta}(B) \mathop{\longrightarrow}_{\eta\to\delta^+}^{a.s.} G_{\delta}(B), \]
  since $B$ is \as{} continuous. It entails the convergence in distribution. Therefore, there exists $\eta \in (\delta, 1]$ such as
  \[ \big| \Pr\big(G_{\eta}(B)\leq t\big) - \Pr\big(G_{\delta}(B)\leq t\big) \big| \leq \epsilon. \]
  
  Moreover, since $\widetilde{\delta}_n \to \delta$ as $n\to\infty$, for $n$ large enough we have
  \begin{gather*}
    \delta \leq \widetilde{\delta}_n \leq \eta \\
    G_{\eta}(\widetilde{B}^n) \leq G_{\widetilde{\delta}_n}(\widetilde{B}^n) \leq G_{\delta}(\widetilde{B}^n) \\
    \Pr\big(G_{\delta}(\widetilde{B}^n)\leq t\big) \leq \Pr\big(G_{\widetilde{\delta}_n}(\widetilde{B}^n)\leq t\big) \leq \Pr\big(G_{\eta}(\widetilde{B}^n)\leq t\big),
  \end{gather*}
  as well as 
  \[ G_{\eta}(B) \leq G_{\delta}(B) \text{ and } \Pr\big(G_{\delta}(B)\leq t\big) \leq \Pr\big(G_{\eta}(B)\leq t\big). \]
  
  Going back to the convergence in distribution of $G_{\delta}(\widetilde{B}^n)$ towards $G_{\delta}(B)$  for any $\delta\in (0, 1]$, for $n$ large enough we have both
  \begin{gather*}
    \big| \Pr\big(G_{\delta}(\widetilde{B}^n)\leq t\big) - \Pr\big(G_{\delta}(B)\leq t\big) \big| \leq \epsilon, \\
    \big| \Pr\big(G_{\eta}(\widetilde{B}^n)\leq t\big) - \Pr\big(G_{\eta}(B)\leq t\big) \big| \leq \epsilon.
  \end{gather*}
  
  Simple triangle inequalities eventually give
  \[ \big| \Pr\big(G_{\widetilde{\delta}_n}(\widetilde{B}^n)\leq t\big) - \Pr\big(G_{\delta}(B)\leq t\big) \big| \leq 2 \epsilon. \]
  
  We have the desired point-wise convergence of the cumulative distribution functions, which proves $\displaystyle G_{\widetilde{\delta}_n}(\widetilde{B}^n) \Lcvge G_\delta(B)$.
\end{proof}

\begin{proof}[Proof of \Cref{thm:simultaneous_asymptotic_ICs}]
  It follows from \Cref{thm:D_ab_asymptotics} and Slutsky’s theorem.
\end{proof}

\begin{proof}[Proof of \Cref{corol:simultaneous_selected_asymptotic_ICs}]
  It comes from \eqref{eq:PoSI_for_regions} and \Cref{thm:simultaneous_asymptotic_ICs}.
\end{proof}

\section{Extension to higher dimensions} \label{sec:higherdim}

In higher dimensions we follow a pattern very similar to the one-dimensional case. We begin with an extension of \Cref{thm:Donsker's} coming from \cite[Corollary 1]{Wichura69} (see also \cite{Arras2020}). It is stated in the space $\cD\big([0, 1]^d\big)$ of all cadlag functions on $[0, 1]^d$, defined in \cite{Wichura68}. $\cD\big([0, 1]^d\big)$ is equipped with the supremum norm and, below in \Cref{thm:Donsker_Brownian_sheet}, the symbol $\weakcvge$ denotes the weak convergence for the associated topology.

\begin{thm}[Donsker's theorem to Brownian sheets] \label{thm:Donsker_Brownian_sheet}
  Let $d\geq 1$ be the dimension, and let $B^{(d)}$ be a Brownian sheet on $[0, 1]^d$, \ie{} $B^{(d)}=\big(B^{(d)}_\mathbf{t}\big)_{\mathbf{t}\in [0, 1]^d}$ is a real-valued, \as{} continuous, Gaussian process with mean zero and covariance
  \[ \cov(B^{(d)}_\mathbf{t}, B^{(d)}_\mathbf{s}) = \prod_{j=1}^d \min(t_j, s_j), \]
  where $\mathbf{t}=(t_1, \ldots, t_d) \in [0, 1]^d$ and $\mathbf{s}=(s_1, \ldots, s_d) \in [0, 1]^d$. 
  Let also $\epsilon_\mathbf{k}$, $\mathbf{k}=(k_1, \ldots, k_d) \in (\N^*)^d$, be a collection of \iid{} centered random variables with finite positive variance $\sigma^2$.
  For any $\mathbf{n}=(n_1, \ldots, n_d) \in (\N^*)^d$ and $\mathbf{t}\in [0, 1]^d$, define 
  \[ S_\mathbf{t}^\mathbf{n} = \{ \mathbf{k}\in (\N^*)^d : \forall j \in \llbracket 1, d\rrbracket, k_j\leq \lfloor t_j n_j\rfloor \}, \]  
  and 
  \begin{equation} \label{eq:def_B_n_higher_dim}
    \widetilde{B}_\mathbf{t}^\mathbf{n} = \left( \sigma \prod_{j=1}^d \sqrt{n_j} \right)^{-1} \sum_{\mathbf{k}\in S_\mathbf{t}^\mathbf{n}} \epsilon_\mathbf{k}.
  \end{equation}
  Then, \emph{if $\forall j \in \llbracket 1, d\rrbracket$, $n_j\to\infty$,} we get
  \[ \widetilde{B}^\mathbf{n} \weakcvge B^{(d)}. \]
\end{thm}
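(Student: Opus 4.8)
The plan is to reduce Theorem~\ref{thm:Donsker_Brownian_sheet} to the cited multidimensional invariance principle of Wichura, and to check that the partial-sum process defined in~\eqref{eq:def_B_n_higher_dim} is exactly (up to the standard normalization) the object to which that principle applies. First I would fix the dimension $d$ and recall the statement of \cite[Corollary 1]{Wichura69}: for an array of \iid{} centered random variables with finite positive variance, the normalized multivariate partial-sum process, viewed as a random element of $\cD\big([0,1]^d\big)$ with the supremum norm (or the associated Skorokhod-type topology of \cite{Wichura68}), converges weakly to the Brownian sheet $B^{(d)}$. The only genuine task is bookkeeping: matching our indexing $\mathbf{k}\in(\N^*)^d$ and our lattice $S_\mathbf{t}^\mathbf{n}=\{\mathbf{k}: k_j\le \lfloor t_j n_j\rfloor\text{ for all }j\}$ with the conventions in Wichura's paper, and checking that the variance normalization $\big(\sigma\prod_j\sqrt{n_j}\big)^{-1}$ is the correct one, since $\card(S_\mathbf{t}^\mathbf{n})\sim\prod_j t_j n_j$ and hence $\var\big(\widetilde B_\mathbf{t}^\mathbf{n}\big)\to\prod_j t_j=\cov(B^{(d)}_\mathbf{t},B^{(d)}_\mathbf{t})$.

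Concretely I would proceed in three steps. Step one: identify $\widetilde B^\mathbf{n}$ with Wichura's partial-sum process, noting that the $d$-fold product structure of the index set means the array $(\epsilon_\mathbf{k})$ is indexed by a $d$-dimensional rectangular lattice and the sums are over lower-left rectangular blocks, which is precisely the setting of the corollary. Step two: verify the finite-dimensional convergence to a Gaussian vector with the stated covariance. For two points $\mathbf{t},\mathbf{s}$ the overlap block is the rectangle with side $j$ equal to $\min(\lfloor t_j n_j\rfloor,\lfloor s_j n_j\rfloor)$, whose cardinality divided by $\prod_j n_j$ converges to $\prod_j\min(t_j,s_j)$, so $\cov\big(\widetilde B_\mathbf{t}^\mathbf{n},\widetilde B_\mathbf{s}^\mathbf{n}\big)\to\prod_j\min(t_j,s_j)$; a multivariate CLT for sums of \iid{} summands then gives joint asymptotic normality. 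Step three: invoke the tightness half of Wichura's corollary in $\cD\big([0,1]^d\big)$ (this is where the \iid{} and finite-variance hypotheses are used beyond the CLT), which upgrades finite-dimensional convergence to weak convergence of the processes; the limit is then the centered Gaussian process with the computed covariance, i.e.\ $B^{(d)}$ by definition.

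The main obstacle is not any deep probability but rather the topological and notational reconciliation: \cite{Wichura69} works with a specific choice of Skorokhod topology on $\cD\big([0,1]^d\big)$ (the one set up in \cite{Wichura68}), and one must make sure that the supremum-norm topology we equip $\cD\big([0,1]^d\big)$ with is compatible with the mode of convergence in the corollary, or else argue that on the relevant subspace (continuous limits, piecewise-constant approximants) convergence in one topology yields convergence of the functionals we care about. In practice the resolution is the same as in the one-dimensional proof of Theorem~\ref{thm:D_ab_asymptotics}: the limit $B^{(d)}$ is \as{} continuous, and the functionals applied later (suprema of normalized increments over rectangles) are continuous for the supremum norm, so a standard argument — convergence in the Skorokhod topology together with continuity of the limit implies convergence in the uniform topology on $C\big([0,1]^d\big)$, via e.g.\ the continuous mapping theorem — closes the gap. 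Since the whole statement is essentially a restatement of the cited corollary, the proof is short: one sentence of setup, the covariance computation, and a citation, with the remark that $\forall j,\ n_j\to\infty$ is exactly the hypothesis under which the cited corollary applies.
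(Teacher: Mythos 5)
Your proposal is correct and matches the paper's treatment: the paper itself presents this theorem as a direct consequence of \cite[Corollary 1]{Wichura69} (with the topology on $\cD\big([0,1]^d\big)$ set up in \cite{Wichura68}) and offers no further proof, so your reduction to that corollary plus the normalization/covariance bookkeeping and the remark that continuity of the limit reconciles the Skorokhod-type and uniform topologies is exactly the intended argument.
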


\subsection{Rectangular regions}

Let us consider the partial order relation on $\N^d$ defined, for any $\mathbf{a}, \mathbf{b} \in \N^d$, by
\[ \mathbf{a}\leq \mathbf{b} \Longleftrightarrow \forall j\in \llbracket 1, d\rrbracket,\, a_j\leq b_j. \]
We also note $\mathbf{a}+\mathbf{1} = (a_j+1)_{1\leq j\leq d}$ and, for any $\mathbf{h}\in\{0, 1\}^d$, $|\mathbf{h}| = \sum_{j=1}^d h_j$ and $\mathbf{a}+\mathbf{h}.\mathbf{b} = (a_j+h_j b_j)_{1\leq j\leq d}$. This last notation naturally extends to $\R^d$.

Suppose we have $\mathbf{a}+\mathbf{1} \leq \mathbf{b} \leq \mathbf{n}$, and define the rectangular region $S_{(\mathbf{a}, \mathbf{b})}$ by
\[
  S_{(\mathbf{a}, \mathbf{b})} 
  = \prod_{j=1}^d \llbracket a_j+1, b_j\rrbracket
  = \big\{ \mathbf{k}\in (\N^*)^d : \mathbf{a}+\mathbf{1} \leq \mathbf{k} \leq \mathbf{b} \big\}.
\]
Consider the random variable
\begin{equation}\label{eq:def_Dab_higher_dim}
  D_{(\mathbf{a}, \mathbf{b})} = \left( \sigma \prod_{j=1}^d \sqrt{b_j-a_j} \right)^{-1} \sum_{\mathbf{k}\in S_{(\mathbf{a}, \mathbf{b})}} \epsilon_\mathbf{k}.
\end{equation}
Similarly to \Cref{lem:D_ab_en_fct_de_B_tilde}, we aim at establishing a link between \eqref{eq:def_Dab_higher_dim} and \eqref{eq:def_B_n_higher_dim}. 
The following lemma may be known by experts, but we provide a full statement and proof of self-containedness.
\begin{lem} \label{lem:cumulativesumsmultidim}
  Let $d\in\N^*$ be a positive integer. Let $\mathbf{a}, \mathbf{b} \in \N^d$ be such that $\mathbf{a}+\mathbf{1} \leq \mathbf{b}$, and pose $S_{(\mathbf{a}, \mathbf{b})} = \prod_{j=1}^d \llbracket a_j+1, b_j\rrbracket$. Then, for any function $f : (\N^*)^d\to \R$, we have
  \[ \sum_{\mathbf{k}\in S_{(\mathbf{a}, \mathbf{b})}} f(\mathbf{k}) = \sum_{\mathbf{h}\in\{0, 1\}^d} (-1)^{d-|\mathbf{h}|} \sum_{\mathbf{k}\in(\N^*)^d : \mathbf{k}\leq \mathbf{a}+\mathbf{h}.(\mathbf{b}-\mathbf{a})} f(\mathbf{k}). \]
\end{lem}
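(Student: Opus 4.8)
The plan is to prove the identity by induction on the dimension $d$, viewing the right-hand side as an iterated inclusion-exclusion formula that expresses the box sum $S_{(\mathbf{a},\mathbf{b})}$ in terms of the ``corner'' cumulative sums over the orthants $\{\mathbf{k}\leq \mathbf{c}\}$. The base case $d=1$ is the elementary telescoping identity
\[
  \sum_{k=a+1}^{b} f(k) = \sum_{k\leq b} f(k) - \sum_{k\leq a} f(k),
\]
which is exactly the stated formula with $\mathbf{h}$ ranging over $\{0\}$ and $\{1\}$, the signs being $(-1)^{1-0}=-1$ and $(-1)^{1-1}=+1$, and $a+h(b-a)$ equal to $a$ or $b$ accordingly.

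For the inductive step, I would assume the identity holds in dimension $d-1$ and write a point of $(\N^*)^d$ as $(\mathbf{k}', k_d)$ with $\mathbf{k}'\in(\N^*)^{d-1}$. Fixing the last coordinate, the sum over $S_{(\mathbf{a},\mathbf{b})}$ factors as
\[
  \sum_{\mathbf{k}\in S_{(\mathbf{a},\mathbf{b})}} f(\mathbf{k})
  = \sum_{k_d=a_d+1}^{b_d} \ \sum_{\mathbf{k}'\in S_{(\mathbf{a}',\mathbf{b}')}} f(\mathbf{k}',k_d),
\]
where $\mathbf{a}',\mathbf{b}'$ are the truncations of $\mathbf{a},\mathbf{b}$ to the first $d-1$ coordinates. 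Applying the induction hypothesis to the inner sum (for each fixed $k_d$, with the function $\mathbf{k}'\mapsto f(\mathbf{k}',k_d)$) turns it into a signed sum over $\mathbf{h}'\in\{0,1\}^{d-1}$ of orthant sums $\sum_{\mathbf{k}'\leq \mathbf{a}'+\mathbf{h}'.(\mathbf{b}'-\mathbf{a}')} f(\mathbf{k}',k_d)$; then applying the one-dimensional base case to the remaining sum over $k_d$ splits it into the $k_d\leq a_d$ and $k_d\leq b_d$ pieces with signs $+1$ and $-1$. Re-indexing $\mathbf{h} = (\mathbf{h}', h_d)$ with $h_d\in\{0,1\}$, the product of signs is $(-1)^{(d-1)-|\mathbf{h}'|}\cdot(-1)^{1-h_d} = (-1)^{d-|\mathbf{h}|}$, and the orthant $\{\mathbf{k}'\leq \mathbf{a}'+\mathbf{h}'.(\mathbf{b}'-\mathbf{a}')\}\times\{k_d\leq a_d+h_d(b_d-a_d)\}$ is precisely $\{\mathbf{k}\leq \mathbf{a}+\mathbf{h}.(\mathbf{b}-\mathbf{a})\}$. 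This recovers the claimed formula in dimension $d$.

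There is no serious obstacle here; the only thing to be careful about is the bookkeeping of signs and the identification of the index set after combining the $(d-1)$-dimensional splitting with the last-coordinate splitting, i.e.\ checking that the Cartesian product of orthants is exactly the $d$-dimensional orthant and that $(-1)^{(d-1)-|\mathbf{h}'|}(-1)^{1-h_d}=(-1)^{d-|\mathbf{h}|}$. An alternative, essentially equivalent, route that avoids induction is to observe that for each coordinate $j$ the telescoping identity can be written as an operator $\Delta_j$ acting on functions, that the box sum equals the composite $\Delta_1\cdots\Delta_d$ applied to the orthant-sum function $\mathbf{c}\mapsto\sum_{\mathbf{k}\leq\mathbf{c}}f(\mathbf{k})$, and that expanding this product of $d$ two-term operators yields exactly the $2^d$ signed terms indexed by $\mathbf{h}\in\{0,1\}^d$; I would likely present the induction version since it is the most self-contained.
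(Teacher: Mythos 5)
Your proof is correct and follows essentially the same route as the paper's: induction on the dimension, splitting off the last coordinate, and combining the one-dimensional telescoping identity with the induction hypothesis (the paper merely writes the chain of equalities from the right-hand side back to the left, whereas you go forward). The sign bookkeeping $(-1)^{(d-1)-|\mathbf{h}'|}(-1)^{1-h_d}=(-1)^{d-|\mathbf{h}|}$ and the identification of the product of orthants are exactly the checks the paper performs.
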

So the sum over a rectangular region is a linear combination of the cumulative sums taken at the vertices of the rectangle.

\begin{proof}
  We use an induction on $d$. With $d=1$, we indeed have
  \[ \sum_{k=a+1}^b f(k) = \sum_{k=1}^b f(k) - \sum_{k=1}^a f(k).\]

  Suppose now that the formula holds for a given $d\geq 1$. Let $\mathbf{a}, \mathbf{b} \in \N^{d+1}$ such that $\mathbf{a}+\mathbf{1} \leq \mathbf{b}$. Let $S_{(\mathbf{a}, \mathbf{b})} = \prod_{j=1}^{d+1} \llbracket a_j+1, b_j\rrbracket$ be defined as above, but consider also
  \[ \widetilde{\mathbf{a}} = (a_j)_{1\leq j\leq d}, \quad \widetilde{\mathbf{b}} = (b_j)_{1\leq j\leq d}, \quad \widetilde{S}_{(\mathbf{a}, \mathbf{b})} = \prod_{j=1}^d [a_j+1, b_j], \]
  which are the projections of $\mathbf{a}$, $\mathbf{b}$, and $S_{(\mathbf{a}, \mathbf{b})}$ on $\N^d$.

  Consider a function $f : (\N^*)^{d+1}\to \R$. 
  Define the set of functions $\{ f_k : (\N^*)^d\to \R , k \in \N^*\}$, by $f(\mathbf{k}) = f_{k_{d+1}}(\widetilde{\mathbf{k}})$ for all $\mathbf{k} \in (\N^*)^{d+1}$. Now, we go back from the end.
  \begin{multline*}
    \sum_{\mathbf{h}\in\{0, 1\}^{d+1}} (-1)^{d+1-|\mathbf{h}|} \sum_{\mathbf{k}\in(\N^*)^{d+1} : \mathbf{k}\leq \mathbf{a}+\mathbf{h}.(\mathbf{b}-\mathbf{a})} f(\mathbf{k}) \\
    \begin{aligned}
      &= \sum_{\mathbf{h}\in\{0, 1\}^{d+1}} (-1)^{d+1-|\mathbf{h}|} \sum_{\widetilde{\mathbf{k}}\in(\N^*)^d : \widetilde{\mathbf{k}}\leq \widetilde{\mathbf{a}}+\widetilde{\mathbf{h}}.(\widetilde{\mathbf{b}}-\widetilde{\mathbf{a}})} \sum_{l=1}^{a_{d+1}+(b_{d+1}-a_{d+1})h_{d+1}} f_l(\widetilde{\mathbf{k}}) \\
      &= \left[\sum_{l=1}^{b_{d+1}} - \sum_{l=1}^{a_{d+1}}\right] 
      \sum_{\widetilde{\mathbf{h}}\in\{0, 1\}^d} (-1)^{d-|\widetilde{\mathbf{h}}|} \sum_{\widetilde{\mathbf{k}}\in(\N^*)^d : \widetilde{\mathbf{k}}\leq \widetilde{\mathbf{a}}+\widetilde{\mathbf{h}}.(\widetilde{\mathbf{b}}-\widetilde{\mathbf{a}})} f_l(\widetilde{\mathbf{k}}) \\
      &= \sum_{l=a_{d+1}+1}^{b_{d+1}} \sum_{\widetilde{\mathbf{k}}\in \widetilde{S}_{(\mathbf{a}, \mathbf{b})}} f_l(\widetilde{\mathbf{k}}) 
      = \sum_{\mathbf{k}\in S_{(\mathbf{a}, \mathbf{b})}} f(\mathbf{k}).
    \end{aligned}
  \end{multline*}
  Therefore the formula holds for any $d\in\N^*$.
\end{proof}

\begin{corol} \label{corol:Dab_cumsums_multidim}
  Further define $\widetilde{\mathbf{t}}(\mathbf{h})\in [0, 1]^d$ as
  \[ \widetilde{\mathbf{t}}(\mathbf{h}) = \left( \frac{a_j+(b_j-a_j)h_j}{n_j} \right)_{1\leq j\leq d}. \]
  Then 
  \[ D_{(\mathbf{a}, \mathbf{b})} = \left(\prod_{j=1}^d \sqrt{\frac{n_j}{b_j-a_j}}\right) \sum_{\mathbf{h}\in \{0, 1\}^d} (-1)^{d-|\mathbf{h}|} \widetilde{B}^\mathbf{n}_{\widetilde{\mathbf{t}}(\mathbf{h})}. \]
\end{corol}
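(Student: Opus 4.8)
The plan is to reduce the statement to a direct application of \Cref{lem:cumulativesumsmultidim} with the choice $f(\mathbf{k}) = \epsilon_\mathbf{k}$. The first step is to check that each evaluation point $\widetilde{\mathbf{t}}(\mathbf{h})$ lies in $[0,1]^d$ and that the partial-sum index set $S^{\mathbf{n}}_{\widetilde{\mathbf{t}}(\mathbf{h})}$ appearing in \eqref{eq:def_B_n_higher_dim} is exactly $\{\mathbf{k}\in(\N^*)^d : \mathbf{k}\leq \mathbf{a}+\mathbf{h}.(\mathbf{b}-\mathbf{a})\}$. This is where integrality is used: since $a_j$, $b_j$ and $n_j$ are integers and $h_j\in\{0,1\}$, the product $\widetilde{t}_j(\mathbf{h})\, n_j = a_j + (b_j-a_j)h_j$ is a nonnegative integer, so $\lfloor \widetilde{t}_j(\mathbf{h})\, n_j\rfloor = a_j + (b_j-a_j)h_j$ with no rounding; and from $\mathbf{a}+\mathbf{1}\leq\mathbf{b}\leq\mathbf{n}$ we get $0 \leq a_j + (b_j-a_j)h_j \leq b_j \leq n_j$, so indeed $\widetilde{\mathbf{t}}(\mathbf{h})\in[0,1]^d$ and $\widetilde{B}^{\mathbf{n}}_{\widetilde{\mathbf{t}}(\mathbf{h})}$ is well defined.

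With this identification in hand, the second step is purely mechanical: expand each $\widetilde{B}^{\mathbf{n}}_{\widetilde{\mathbf{t}}(\mathbf{h})}$ via its definition \eqref{eq:def_B_n_higher_dim}, form the signed combination $\sum_{\mathbf{h}\in\{0,1\}^d}(-1)^{d-|\mathbf{h}|}\widetilde{B}^{\mathbf{n}}_{\widetilde{\mathbf{t}}(\mathbf{h})}$, pull out the common factor $\bigl(\sigma\prod_{j}\sqrt{n_j}\bigr)^{-1}$, and apply \Cref{lem:cumulativesumsmultidim} to recognise the remaining alternating double sum as $\sum_{\mathbf{k}\in S_{(\mathbf{a},\mathbf{b})}}\epsilon_\mathbf{k}$. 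This yields
\[
  \sum_{\mathbf{h}\in\{0,1\}^d}(-1)^{d-|\mathbf{h}|}\widetilde{B}^{\mathbf{n}}_{\widetilde{\mathbf{t}}(\mathbf{h})}
  = \left(\sigma\prod_{j=1}^d\sqrt{n_j}\right)^{-1}\sum_{\mathbf{k}\in S_{(\mathbf{a},\mathbf{b})}}\epsilon_\mathbf{k}.
\]

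The final step is to multiply both sides by $\prod_{j=1}^d\sqrt{n_j/(b_j-a_j)}$ and simplify the normalisation: the constant collapses to $\bigl(\sigma\prod_{j}\sqrt{b_j-a_j}\bigr)^{-1}$, so the right-hand side becomes precisely $D_{(\mathbf{a},\mathbf{b})}$ as defined in \eqref{eq:def_Dab_higher_dim}, which is the claimed identity. I do not expect any genuine obstacle here; the statement is essentially a repackaging of \Cref{lem:cumulativesumsmultidim}, and the only point requiring a moment of care is the exactness of the floor in the first step, which is immediate from the integrality of $\mathbf{a}$, $\mathbf{b}$ and $\mathbf{n}$. For $d=1$ this recovers the corresponding step in the proof of \Cref{lem:D_ab_en_fct_de_B_tilde}.
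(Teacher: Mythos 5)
Your proof is correct and follows the same route as the paper's: apply \Cref{lem:cumulativesumsmultidim} with $f(\mathbf{k})=\epsilon_{\mathbf{k}}$, identify $S^{\mathbf{n}}_{\widetilde{\mathbf{t}}(\mathbf{h})}$ with $\{\mathbf{k}\leq\mathbf{a}+\mathbf{h}.(\mathbf{b}-\mathbf{a})\}$ via the definition \eqref{eq:def_B_n_higher_dim}, and rescale the normalisation. Your explicit check that the floors are exact (by integrality of $\mathbf{a}$, $\mathbf{b}$, $\mathbf{n}$) is a point the paper states without comment, so nothing is missing.
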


\begin{proof}
  Begin with \eqref{eq:def_Dab_higher_dim} and apply \Cref{lem:cumulativesumsmultidim} to $\sum_{\mathbf{k}\in S_{(\mathbf{a}, \mathbf{b})}} \epsilon_\mathbf{k}$. The rest follows from the equivalence \[\mathbf{k}\in S_{\widetilde{\mathbf{t}}(\mathbf{h})}^\mathbf{n} \Longleftrightarrow \mathbf{k}\leq \mathbf{a}+\mathbf{h}.(\mathbf{b}-\mathbf{a})\] and the definition \eqref{eq:def_B_n_higher_dim} of $\widetilde{B}^\mathbf{n}$.
\end{proof}

\subsection{Families of rectangular regions}

Several families of rectangular regions can be considered. In practice one has to fix in advance the family of regions among which the selection procedure can choose. 

As an example of such a possible family, let $\delta\in (0, 1]$ be fixed, and define
\[ \cM_{\mathbf{n}, \delta}^{d, 1} = \{ (\mathbf{a}, \mathbf{b}) \in (\N^{d})^2 : \forall j \in \llbracket 1, d\rrbracket,\, a_j+\delta n_j \leq b_j \leq n_j \} \]
and
\[ E_\delta^{d, 1} = \{ (\mathbf{s}, \mathbf{t}) \in ([0, 1]^d)^2 : \forall j \in \llbracket 1, d\rrbracket,\, s_j+\delta \leq t_j \}. \]

Then, a two-dimensional extension of \Cref{thm:D_ab_asymptotics}, adapted to the above family of regions, is the following.

\begin{thm} \label{thm:multidim_BaBoNe_processes}
  Let $B^{(d)}$ be a Brownian sheet on $[0, 1]^d$, and fix $\delta\in (0, 1]$. Then, as long as $\forall j \in \llbracket 1, d\rrbracket$, $n_j\to\infty$, we have
  \[ \sup_{(\mathbf{a}, \mathbf{b}) \in \cM_{\mathbf{n}, \delta}^{d, 1}} \left| D_{(\mathbf{a}, \mathbf{b})} \right| \weakcvge \sup_{(\mathbf{s}, \mathbf{t})\in E_\delta^{d, 1}} \left( \prod_{j=1}^d \sqrt{t_j-s_j} \right)^{-1} \left|\sum_{\mathbf{h}\in \{0, 1\}^d} (-1)^{d-|\mathbf{h}|}\, B^{(d)}_{\mathbf{s}+\mathbf{h}.(\mathbf{t}-\mathbf{s})} \right|. \]
\end{thm}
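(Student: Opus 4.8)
The plan is to reproduce, in dimension $d$, the three ingredients behind \Cref{thm:D_ab_asymptotics}: realize both sides as a single fixed functional of a càdlàg process; show that functional is continuous for the supremum norm; and absorb the discretization of the margin constraint by a monotonicity-and-continuity argument in the margin parameter. Concretely, for $w\in\cD([0,1]^d)$ and a vector $\underline{\delta}=(\delta_1,\dots,\delta_d)\in(0,1]^d$, set
\[
  G^{d,1}_{\underline{\delta}}(w)=\sup_{(\mathbf{s},\mathbf{t})\,:\,s_j+\delta_j\le t_j\ \forall j}\ \Big(\prod_{j=1}^d\sqrt{t_j-s_j}\Big)^{-1}\Big|\sum_{\mathbf{h}\in\{0,1\}^d}(-1)^{d-|\mathbf{h}|}\,w_{\mathbf{s}+\mathbf{h}.(\mathbf{t}-\mathbf{s})}\Big|,
\]
and write $G^{d,1}_{\delta}=G^{d,1}_{(\delta,\dots,\delta)}$, so that the right-hand side of \Cref{thm:multidim_BaBoNe_processes} is $G^{d,1}_{\delta}(B^{(d)})$. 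Taking $s_j=a_j/n_j$ and $t_j=b_j/n_j$ in \Cref{corol:Dab_cumsums_multidim} gives $\prod_j\sqrt{n_j/(b_j-a_j)}=\big(\prod_j\sqrt{t_j-s_j}\big)^{-1}$ and $\widetilde{\mathbf{t}}(\mathbf{h})=\mathbf{s}+\mathbf{h}.(\mathbf{t}-\mathbf{s})$, so that each $D_{(\mathbf{a},\mathbf{b})}$ is exactly the generic term of $G^{d,1}_{\cdot}$ evaluated at the grid point $(\mathbf{a}/\mathbf{n},\mathbf{b}/\mathbf{n})$. Since for integer $a_j,b_j$ the constraint $a_j+\delta n_j\le b_j\le n_j$ amounts to $t_j-s_j\ge\widetilde{\delta}_{n_j}:=\lceil\delta n_j\rceil/n_j$ together with $t_j\le1$, and since $\widetilde{B}^{\mathbf{n}}$ is constant on each cell $\prod_j[k_j/n_j,(k_j+1)/n_j)$, the piecewise-constancy argument of \Cref{lem:D_ab_en_fct_de_B_tilde} yields
\[
  \sup_{(\mathbf{a},\mathbf{b})\in\cM^{d,1}_{\mathbf{n},\delta}}\big|D_{(\mathbf{a},\mathbf{b})}\big|=G^{d,1}_{\widetilde{\underline{\delta}}_{\mathbf{n}}}\big(\widetilde{B}^{\mathbf{n}}\big),\qquad\widetilde{\underline{\delta}}_{\mathbf{n}}=(\widetilde{\delta}_{n_1},\dots,\widetilde{\delta}_{n_d}),\quad\delta\le\widetilde{\delta}_{n_j}<\delta+\tfrac{1}{n_j}
\]
(or, if one prefers to avoid discussing equality, a two-sided bound of the same type, with a multiplicative factor tending to $1$, which is equally sufficient below).

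Next I would prove the multidimensional versions of \Cref{lem:approx_B,lem:approx_delta}. First, if $\min_j\delta_j\ge\delta_0>0$, then each of the $2^d$ summands above changes by at most $\|v-w\|_\infty$ and $\prod_j\sqrt{t_j-s_j}\ge\delta_0^{d/2}$, so $\big|G^{d,1}_{\underline{\delta}}(v)-G^{d,1}_{\underline{\delta}}(w)\big|\le 2^d\delta_0^{-d/2}\|v-w\|_\infty$, i.e.\ $G^{d,1}_{\underline{\delta}}$ is Lipschitz on $(\cD([0,1]^d),\|\cdot\|_\infty)$; together with \Cref{thm:Donsker_Brownian_sheet} and the continuous mapping theorem this gives $G^{d,1}_{\underline{\delta}}(\widetilde{B}^{\mathbf{n}})\weakcvge G^{d,1}_{\underline{\delta}}(B^{(d)})$ as all $n_j\to\infty$, for each fixed $\underline{\delta}$. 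Second, $\underline{\delta}\mapsto G^{d,1}_{\underline{\delta}}(w)$ is coordinatewise non-increasing (enlarging a margin shrinks the index set), and for continuous $w$ it is continuous: as in the proof of \Cref{lem:approx_delta}(2), pick a maximizer $(\mathbf{u},\mathbf{v})$ of the (continuous) objective over the compact index set, and for $\underline{\mu}\ge\underline{\delta}$ with $\gamma:=\max_j(\mu_j-\delta_j)$ small replace, in each tight coordinate $j$ (those with $v_j-u_j\le\mu_j$), the interval $[u_j,v_j]$ by $[\widetilde{u}_j,\widetilde{v}_j]$ with $\widetilde{u}_j=\max(0,v_j-\mu_j)$ and $\widetilde{v}_j=\widetilde{u}_j+\mu_j=\max(v_j,\mu_j)\le1$; every evaluation point then moves by at most $\gamma$ per coordinate, and the bound $|G^{d,1}_{\underline{\mu}}(w)-G^{d,1}_{\underline{\delta}}(w)|\to0$ follows from the uniform-continuity modulus of $w$ over the $2^d$ points together with the factor $\prod_j\big(\delta_j/(\delta_j+\gamma)\big)^{1/2}$ controlling the change of normalisation, exactly as in dimension one.

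Finally, I would conclude as in the proof of \Cref{thm:D_ab_asymptotics}: it suffices to show that the distribution function of $G^{d,1}_{\widetilde{\underline{\delta}}_{\mathbf{n}}}(\widetilde{B}^{\mathbf{n}})$ converges pointwise to that of $G^{d,1}_{\delta}(B^{(d)})$. Since $B^{(d)}$ is \as{} continuous, $G^{d,1}_{\eta}(B^{(d)})\to G^{d,1}_{\delta}(B^{(d)})$ \as{}, hence in distribution, as $\eta\downarrow\delta$; so, given $t\in\R$ and $\epsilon>0$, one fixes $\eta\in(\delta,1]$ with $\big|\Pr(G^{d,1}_{\eta}(B^{(d)})\le t)-\Pr(G^{d,1}_{\delta}(B^{(d)})\le t)\big|\le\epsilon$. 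For $\mathbf{n}$ large, $\delta\le\widetilde{\delta}_{n_j}\le\eta$ for all $j$, so monotonicity gives $G^{d,1}_{\eta}(\widetilde{B}^{\mathbf{n}})\le G^{d,1}_{\widetilde{\underline{\delta}}_{\mathbf{n}}}(\widetilde{B}^{\mathbf{n}})\le G^{d,1}_{\delta}(\widetilde{B}^{\mathbf{n}})$ and likewise $G^{d,1}_{\eta}(B^{(d)})\le G^{d,1}_{\delta}(B^{(d)})$; inserting the weak limits $G^{d,1}_{\eta}(\widetilde{B}^{\mathbf{n}})\weakcvge G^{d,1}_{\eta}(B^{(d)})$ and $G^{d,1}_{\delta}(\widetilde{B}^{\mathbf{n}})\weakcvge G^{d,1}_{\delta}(B^{(d)})$ from the previous step and chaining triangle inequalities for the distribution functions yields $\big|\Pr(G^{d,1}_{\widetilde{\underline{\delta}}_{\mathbf{n}}}(\widetilde{B}^{\mathbf{n}})\le t)-\Pr(G^{d,1}_{\delta}(B^{(d)})\le t)\big|\le2\epsilon$ for $\mathbf{n}$ large, which is the assertion.

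The step I expect to be the main obstacle is the continuity of $\underline{\delta}\mapsto G^{d,1}_{\underline{\delta}}(w)$ for continuous $w$: the one-dimensional case (\Cref{lem:approx_delta}(2)) already required careful control of the $\sqrt{t-s}$ normalisation and of the boundary of $[0,1]$, and here the same must be carried out simultaneously over $d$ coordinates and $2^d$ evaluation points, with the interval enlargement done coordinate by coordinate. By contrast, the identity of the first step is routine given \Cref{lem:cumulativesumsmultidim} and \Cref{corol:Dab_cumsums_multidim} together with the cell structure of $\widetilde{B}^{\mathbf{n}}$, and the concluding sandwich is formally identical to the one-dimensional argument.
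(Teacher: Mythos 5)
Your argument is correct and coincides with the paper's own (sketched) proof: your vector-margin functional $G^{d,1}_{\underline{\delta}}$ is exactly the paper's $G^{d,2}_{\boldsymbol{\gamma}}$ from \eqref{eq:def_G_delta_multidim}, your identification of the discrete supremum with $G^{d,2}_{\widetilde{\boldsymbol{\gamma}}_{\mathbf{n}}}\big(\widetilde{B}^{\mathbf{n}}\big)$ is \Cref{lem:Dab_deltatilde_multidim}, and your three checks (Lipschitz continuity in $w$, coordinatewise monotonicity in the margin, continuity in the margin for continuous $w$) followed by the sandwich/triangle-inequality conclusion are precisely the steps the paper lists after that lemma. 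You also correctly isolated the one genuinely new point in higher dimension, namely that the discretization $\lceil\delta n_j\rceil/n_j$ must be handled coordinate by coordinate, which is what forces the vector-valued margin parameter.
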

From there, simultaneous confidence intervals can be built, as was done previously in \Cref{thm:simultaneous_asymptotic_ICs} and \Cref{corol:simultaneous_selected_asymptotic_ICs}.

The main deviation of the proof of \Cref{thm:multidim_BaBoNe_processes} from the one of \Cref{thm:D_ab_asymptotics} is to be found in the approximation of $\delta$ by $\widetilde{\delta}_n$ in \Cref{lem:D_ab_en_fct_de_B_tilde}. In higher dimension the approximation must be done separately in each coordinate, unless $\mathbf{n} = n \mathbf{1}$ (that is, unless $\mathbf{n}$ grows at the same rate in all directions).

This leads us to define a secondary family of rectangular regions. 
Let $\boldsymbol{\gamma}\in (0, 1]^d$ be fixed, and define
\[ \cM_{\mathbf{n},\boldsymbol{\gamma}}^{d, 2} = \{ (\mathbf{a}, \mathbf{b}) \in (\N^{d})^2 : \forall j \in \llbracket 1, d\rrbracket,\, a_j+ \gamma_j n_j \leq b_j \leq n_j \} \]
and
\[ E_{\boldsymbol{\gamma}}^{d, 2} = \{ (\mathbf{s}, \mathbf{t}) \in ([0, 1]^d)^2 : \forall j \in \llbracket 1, d\rrbracket,\, s_j+\gamma_j \leq t_j \}. \]
Note that $\cM_{\mathbf{n}, \delta}^{d, 1} = \cM_{\mathbf{n}, \delta \mathbf{1}}^{d, 2}$ and $ E_\delta^{d, 1} = E_{\delta \mathbf{1}}^{d, 2}$. Variants of \Cref{thm:multidim_BaBoNe_processes} could easily be stated on the basis of this secondary family of rectangular regions.

Let also $G_{\boldsymbol{\gamma}}^{d, 2} : \cD\big([0, 1]^d\big) \to \R^+$ be defined by
\begin{equation} \label{eq:def_G_delta_multidim}
  G_{\boldsymbol{\gamma}}^{d, 2}\big(w^{(d)}\big) := \sup_{(\mathbf{s}, \mathbf{t})\in E_{\boldsymbol{\gamma}}^{d, 2}} \left( \prod_{j=1}^d \sqrt{t_j-s_j} \right)^{-1} \left|\sum_{\mathbf{h}\in \{0, 1\}^d} (-1)^{d-|\mathbf{h}|}\, w^{(d)}_{\mathbf{s}+\mathbf{h}.(\mathbf{t}-\mathbf{s})} \right|,
\end{equation}
so the right-hand limit in \Cref{thm:multidim_BaBoNe_processes} is $G_{\delta \mathbf{1}}^{d, 2}\big(B^{(d)}\big)$.

\begin{lem} \label{lem:Dab_deltatilde_multidim}
  \[ \sup_{(\mathbf{a}, \mathbf{b}) \in \cM_{\mathbf{n}, \delta}^{d, 1}} \left| D_{(\mathbf{a}, \mathbf{b})} \right| = G_{\widetilde{{\boldsymbol{\gamma}}}_\mathbf{n}}^{d, 2}\big(\widetilde{B}^\mathbf{n}\big), \]
  where $\widetilde{{\boldsymbol{\gamma}}}_\mathbf{n} = \left(\dfrac{\lceil \delta n_j \rceil}{n_j}\right)_{1\leq j\leq d}$. We therefore have $\delta \mathbf{1} \leq \widetilde{{\boldsymbol{\gamma}}}_\mathbf{n}$ and $\mathbf{n}.\big(\widetilde{{\boldsymbol{\gamma}}}_\mathbf{n}-\delta \mathbf{1}\big)\leq\mathbf{1}$.
\end{lem}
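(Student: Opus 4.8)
The plan is to carry over the proof of \Cref{lem:D_ab_en_fct_de_B_tilde} to dimension $d$, coordinate by coordinate, with \Cref{corol:Dab_cumsums_multidim} playing the role that the one-line identity $D_{a,b}=(\widetilde B^n_t-\widetilde B^n_s)/\sqrt{t-s}$ played there.

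First I would fix $(\mathbf a,\mathbf b)\in\cM^{d,1}_{\mathbf n,\delta}$ and set $s_j=a_j/n_j$, $t_j=b_j/n_j$, so that $(\mathbf s,\mathbf t)\in([0,1]^d)^2$. With this substitution $\widetilde{\mathbf t}(\mathbf h)=\mathbf s+\mathbf h.(\mathbf t-\mathbf s)$ and $\prod_{j=1}^d\sqrt{n_j/(b_j-a_j)}=\prod_{j=1}^d(t_j-s_j)^{-1/2}$, so \Cref{corol:Dab_cumsums_multidim} says exactly that $|D_{(\mathbf a,\mathbf b)}|$ equals the quantity $\big(\prod_{j=1}^d\sqrt{t_j-s_j}\big)^{-1}\big|\sum_{\mathbf h\in\{0,1\}^d}(-1)^{d-|\mathbf h|}\widetilde B^{\mathbf n}_{\mathbf s+\mathbf h.(\mathbf t-\mathbf s)}\big|$ that appears inside the supremum \eqref{eq:def_G_delta_multidim}. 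Next I would translate the constraints defining $\cM^{d,1}_{\mathbf n,\delta}$: for each $j$, the integers $a_j<b_j$ satisfy $a_j+\delta n_j\le b_j\le n_j$ if and only if $b_j-a_j\ge\lceil\delta n_j\rceil$, i.e.\ $t_j-s_j\ge\lceil\delta n_j\rceil/n_j$; since also $0\le s_j<t_j\le1$, this is precisely $(\mathbf s,\mathbf t)\in E^{d,2}_{\widetilde{\boldsymbol\gamma}_{\mathbf n}}$ with $\mathbf s,\mathbf t$ on the grid of mesh $1/n_j$ in the $j$-th coordinate. Taking the supremum over $(\mathbf a,\mathbf b)$ already gives $\sup_{(\mathbf a,\mathbf b)\in\cM^{d,1}_{\mathbf n,\delta}}|D_{(\mathbf a,\mathbf b)}|\le G^{d,2}_{\widetilde{\boldsymbol\gamma}_{\mathbf n}}(\widetilde B^{\mathbf n})$.

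For the converse inequality I would use, as in \Cref{lem:D_ab_en_fct_de_B_tilde}, that $\widetilde B^{\mathbf n}$ is piecewise constant: by \eqref{eq:def_B_n_higher_dim} it depends on $\mathbf t$ only through the floors $(\lfloor t_jn_j\rfloor)_j$, hence is constant on each half-open grid box $\prod_{j=1}^d[k_j/n_j,(k_j+1)/n_j)$. Thus given any $(\mathbf s,\mathbf t)\in E^{d,2}_{\widetilde{\boldsymbol\gamma}_{\mathbf n}}$, replacing each $s_j$ by $\lfloor s_jn_j\rfloor/n_j$ and each $t_j$ by $\lfloor t_jn_j\rfloor/n_j$ leaves all $2^d$ values $\widetilde B^{\mathbf n}_{\mathbf s+\mathbf h.(\mathbf t-\mathbf s)}$ unchanged, and the corner $(\mathbf a,\mathbf b)=\big((\lfloor s_jn_j\rfloor)_j,(\lfloor t_jn_j\rfloor)_j\big)$ lies in $\cM^{d,1}_{\mathbf n,\delta}$ because $\lfloor t_jn_j\rfloor-\lfloor s_jn_j\rfloor\ge\lfloor(t_j-s_j)n_j\rfloor\ge\lceil\delta n_j\rceil$. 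A coordinatewise comparison of the continuous normalization $\prod_{j=1}^d\sqrt{t_j-s_j}$ with $\prod_{j=1}^d\sqrt{(b_j-a_j)/n_j}$ then shows the supremum defining $G^{d,2}_{\widetilde{\boldsymbol\gamma}_{\mathbf n}}(\widetilde B^{\mathbf n})$ is realized over the grid, whence $G^{d,2}_{\widetilde{\boldsymbol\gamma}_{\mathbf n}}(\widetilde B^{\mathbf n})=\sup_{(\mathbf a,\mathbf b)\in\cM^{d,1}_{\mathbf n,\delta}}|D_{(\mathbf a,\mathbf b)}|$. Finally, the two side conditions $\delta\mathbf 1\le\widetilde{\boldsymbol\gamma}_{\mathbf n}$ and $\mathbf n.(\widetilde{\boldsymbol\gamma}_{\mathbf n}-\delta\mathbf 1)\le\mathbf 1$ are the coordinatewise restatement of $\delta n_j\le\lceil\delta n_j\rceil<\delta n_j+1$.

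The step I expect to require the most care is this ``reduction of the supremum to the grid'': unlike the numerator, the factor $\prod_{j=1}^d\sqrt{t_j-s_j}$ is genuinely continuous and not locally constant, so one must check that within each grid box, and under the constraint $(\mathbf s,\mathbf t)\in E^{d,2}_{\widetilde{\boldsymbol\gamma}_{\mathbf n}}$, the configuration minimizing $\prod_{j=1}^d(t_j-s_j)$ is the grid configuration. Fortunately this product and the constraints both factorize over the $d$ coordinates, each coordinate being exactly the situation already handled in \Cref{lem:D_ab_en_fct_de_B_tilde}, so the argument is $d$ independent copies of the one-dimensional one together with bookkeeping; the only genuinely multidimensional ingredient, \Cref{lem:cumulativesumsmultidim} and \Cref{corol:Dab_cumsums_multidim}, is already in place.
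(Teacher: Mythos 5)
Your reduction in the easy direction is sound and is exactly the route the paper intends (the paper itself gives no details, calling the lemma a straightforward extension of \Cref{lem:D_ab_en_fct_de_B_tilde} via \Cref{corol:Dab_cumsums_multidim}): the corollary identifies $|D_{(\mathbf a,\mathbf b)}|$ with the integrand of \eqref{eq:def_G_delta_multidim} at the grid point $(\mathbf a/\mathbf n,\mathbf b/\mathbf n)$, the constraint $a_j+\delta n_j\le b_j$ is equivalent for integers to $b_j-a_j\ge\lceil\delta n_j\rceil$, and hence $\sup_{\cM^{d,1}_{\mathbf n,\delta}}|D_{(\mathbf a,\mathbf b)}|\le G^{d,2}_{\widetilde{\boldsymbol\gamma}_{\mathbf n}}(\widetilde B^{\mathbf n})$. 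The side conditions on $\widetilde{\boldsymbol\gamma}_{\mathbf n}$ are also correctly dispatched.

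The gap is in the converse inequality, at precisely the step you flag as delicate and then wave away. With $a_j=\lfloor s_jn_j\rfloor$, $b_j=\lfloor t_jn_j\rfloor$ the numerator is preserved and $(\mathbf a,\mathbf b)\in\cM^{d,1}_{\mathbf n,\delta}$, but the ``coordinatewise comparison of normalizations'' goes the wrong way: $(b_j-a_j)/n_j-(t_j-s_j)$ equals $(\{s_jn_j\}-\{t_jn_j\})/n_j$ (fractional parts), which can be positive, so the grid point can have a strictly \emph{larger} denominator and hence a strictly \emph{smaller} ratio than the continuous point. Indeed, fixing the cell $\lfloor s_jn_j\rfloor=a_j$, $\lfloor t_jn_j\rfloor=b_j$ and letting $s_j\uparrow(a_j+1)/n_j$ with $t_j=b_j/n_j$ leaves all $2^d$ corner values of $\widetilde B^{\mathbf n}$ unchanged while driving $t_j-s_j$ down towards $(b_j-a_j-1)/n_j$; whenever $b_j-a_j-1\ge\lceil\delta n_j\rceil$ this stays inside $E^{d,2}_{\widetilde{\boldsymbol\gamma}_{\mathbf n}}$ and the supremum over that cell is $\prod_j\sqrt{(b_j-a_j)/\max(\lceil\delta n_j\rceil,\,b_j-a_j-1)}$ times $|D_{(\mathbf a,\mathbf b)}|$, i.e.\ strictly larger for a nonzero numerator. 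Already in dimension $1$ with $n=2$, $\delta=1/2$, $\sigma=1$, $\epsilon_1=\epsilon_2=1$, the discrete maximum is $\sqrt2$ while $G_{1/2}(\widetilde B^2)=2$ (approached with $s\uparrow1/2$, $t=1$). So the supremum defining $G^{d,2}_{\widetilde{\boldsymbol\gamma}_{\mathbf n}}(\widetilde B^{\mathbf n})$ is \emph{not} realized on the grid, and the exact equality cannot be obtained by this reduction; appealing to ``the situation already handled in \Cref{lem:D_ab_en_fct_de_B_tilde}'' does not close the gap, because the published proof of that lemma is silent on exactly the same point. What your argument does establish is the two-sided bound
\[
\sup_{(\mathbf a,\mathbf b)\in\cM^{d,1}_{\mathbf n,\delta}}\left|D_{(\mathbf a,\mathbf b)}\right|\;\le\;G^{d,2}_{\widetilde{\boldsymbol\gamma}_{\mathbf n}}\big(\widetilde B^{\mathbf n}\big)\;\le\;\left(\prod_{j=1}^d\sqrt{1+\tfrac1{\lceil\delta n_j\rceil}}\right)\sup_{(\mathbf a,\mathbf b)\in\cM^{d,1}_{\mathbf n,\delta}}\left|D_{(\mathbf a,\mathbf b)}\right|,
\]
whose prefactor tends to $1$ and which suffices for \Cref{thm:multidim_BaBoNe_processes}; to keep the lemma as an identity you would have to restate it with this correction factor (or restrict the supremum in $G^{d,2}$ to grid points), rather than prove it as written.
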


\Cref{lem:Dab_deltatilde_multidim} is a straightforward extension of \Cref{lem:D_ab_en_fct_de_B_tilde} using \Cref{corol:Dab_cumsums_multidim}. We will not give the details neither for the rest of the proof of \Cref{thm:multidim_BaBoNe_processes}, which can be similarly adapted from the proof of \Cref{thm:D_ab_asymptotics}. We have to check that 
\begin{itemize}
    \item for all ${\boldsymbol{\gamma}}\in (0, 1]^d$, $G_{\boldsymbol{\gamma}}^{d, 2}$ is continuous on $\cD\big([0, 1]^d\big)$, 
    \item for any sheet $w^{(d)}\in\cD\big([0, 1]^d\big)$, ${\boldsymbol{\gamma}}_1\leq {\boldsymbol{\gamma}}_2 \Longrightarrow G_{{\boldsymbol{\gamma}}_1}^{d, 2}\big(w^{(d)}\big) \geq G_{{\boldsymbol{\gamma}}_2}^{d, 2}\big(w^{(d)}\big)$,
    \item for any continuous sheet $w^{(d)}$, the function ${\boldsymbol{\gamma}}\mapsto G_{\boldsymbol{\gamma}}^{d, 2}\big(w^{(d)}\big)$ is continuous on $(0, 1]^d$,
\end{itemize}
and conclude using triangular inegalities and Donsker's \Cref{thm:Donsker_Brownian_sheet}.

\subsection{Simulations in dimension 2} \label{sec:2Dsimulations}

Our simulations in dimension 2 follow a similar framework as the ones in dimension 1, see \Cref{sec:1Dsimulations}. Again, all Monte-Carlo estimates use $10^6$ independent trajectories of (simulated) 2D POSIR processes.

First, we computed the quantiles of the 2D POSIR process $G_{\delta \mathbf{1}}^{2, 2}\big(B^{(2)}\big)$ appearing in \Cref{thm:multidim_BaBoNe_processes}. Due to heavy computational costs, we limited ourselves to a discretization number $n=400$ in each direction. It still means that we simulated $1.6 \times 10^5$ independent standard variables $\epsilon_k$ for each trajectory. See \Cref{table:quantiles_2D} for the results.

\begin{table}[hptb]
  \begin{center}
    \small
    \begin{tabular}{|r|cccccccccc|}
      \hline
      $\alpha\backslash\delta$ & 1 & 0.9 & 0.8 & 0.7 & 0.6 & 0.5 & 0.4 & 0.3 & 0.2 & 0.1 \\ 
      \hline
      .5 & 0.675 & 1.480 & 1.888 & 2.233 & 2.552 & 2.867 & 3.189 & 3.541 & 3.950 & 4.475 \\ 
      .2 & 1.282 & 2.070 & 2.461 & 2.787 & 3.087 & 3.380 & 3.677 & 3.996 & 4.360 & 4.823 \\ 
      .1 & 1.644 & 2.425 & 2.805 & 3.118 & 3.405 & 3.682 & 3.963 & 4.263 & 4.599 & 5.032 \\ 
      .05 & 1.959 & 2.732 & 3.104 & 3.406 & 3.681 & 3.946 & 4.214 & 4.495 & 4.812 & 5.219 \\ 
      .01 & 2.577 & 3.337 & 3.691 & 3.973 & 4.227 & 4.468 & 4.707 & 4.958 & 5.244 & 5.607 \\ 
      .005 & 2.810 & 3.562 & 3.910 & 4.185 & 4.429 & 4.661 & 4.893 & 5.137 & 5.413 & 5.757 \\ 
      .001 & 3.296 & 4.025 & 4.363 & 4.628 & 4.864 & 5.071 & 5.294 & 5.521 & 5.766 & 6.086 \\
      \hline
    \end{tabular}
  \end{center}
  \caption{Some empirical quantiles for the 2D POSIR process.}
  \label{table:quantiles_2D}
\end{table}

Theroretically, in 2D as previously in 1D, discretization of the continuous POSIR process introduces an unknown negative bias, especially for $\delta$ small. This would lead to slightly smaller confidence intervals, and thus to slightly larger effective simultaneous error levels.

However, the estimates of effective error levels presented below strongly suggest that this supposed bias in the quantiles is indeed negligible, at least for $\delta \ge 0.3$.

\begin{figure}[htb]
  \includegraphics[width=\textwidth]{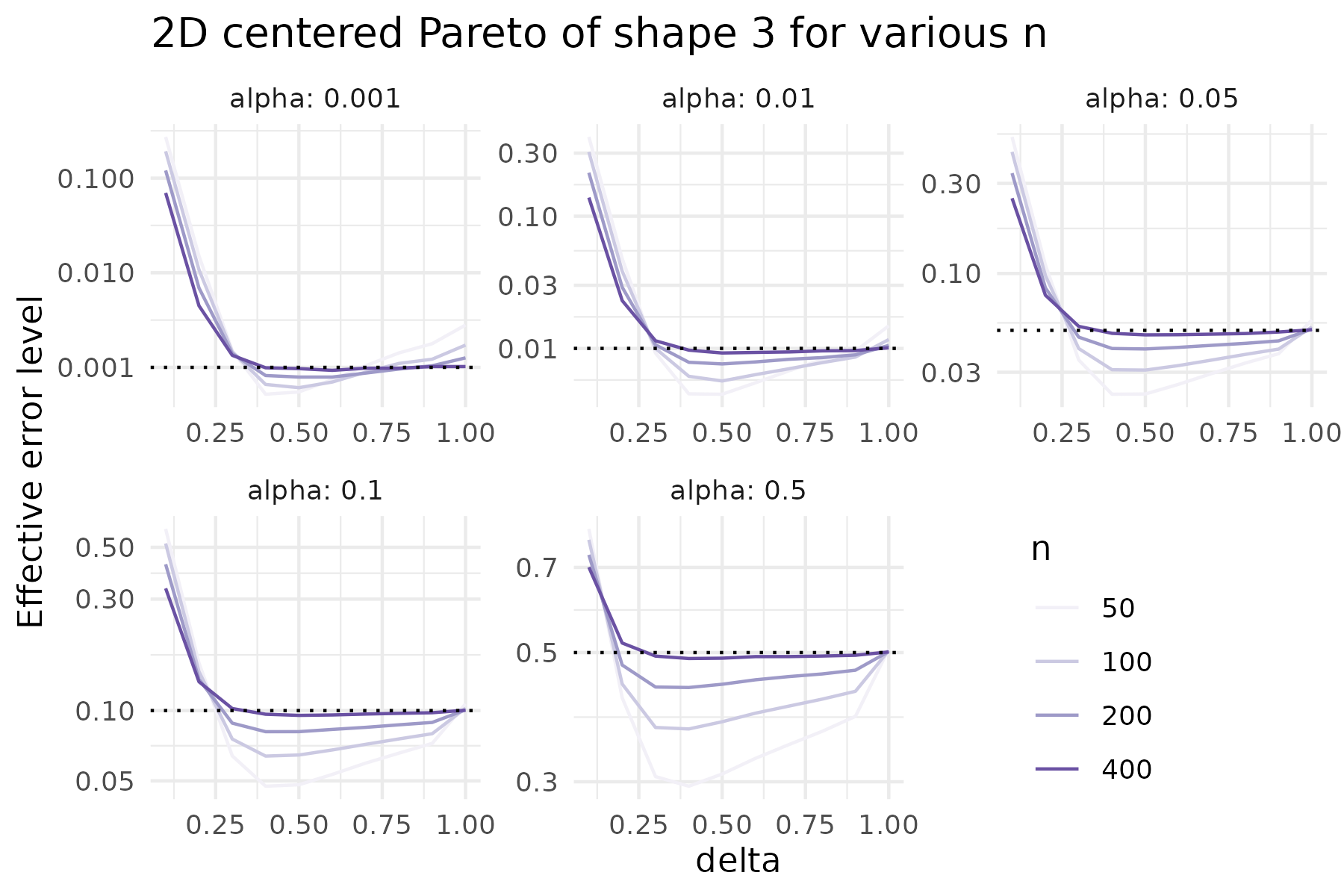}
  \vspace*{-1cm}
  \caption{Effective simultaneous error levels in dimension 2 for the Pareto distribution with shape parameter $3$ and various $n$.}
  \label{fig:eff-conf-level_2D_centered-pareto_3}
\end{figure}

Indeed, we estimated the effective error levels using again the Pareto distribution with shape parameter $3$ for the variables $\epsilon_k$, at various values of $n$. The results, which can be found in \Cref{fig:eff-conf-level_2D_centered-pareto_3}, are rather better than in dimension 1 as long as $\delta$ does not go below $0.3$. In particular, the asymptotics in $n$ are much better. 

\section{Conclusion}

We have provided confidence intervals on the average signal of a region, that account for the fact that this region is previously selected in a data-driven way. These intervals are based on the process mapping all possible regions to their estimation errors, for which we prove the convergence to an explicit limiting Gaussian process. This asymptotic approach allows for any model selection procedure, and any noise distribution with finite variance. We also note that, in a straightforward manner, our confidence intervals can yield statistical tests, for instance on the comparison between regions, that are also valid in a post-selection context.

In practice, the data-set size needs not be large for the empirical coverage proportions to already reach the nominal level. Similarly, noise distributions with heavy tails do not overly damage these coverage proportions. Compared to alternative approaches such as data splitting, our procedure provides the most freedom, in that even multiple confidence intervals for multiple (and even adaptively chosen) regions come with asymptotic post-selection guarantees. 

A possible extension of our work could rely on a better theoretical understanding of the underlying Gaussian process. For instance, exploiting the scale invariance of the Brownian motion may lead to sharper estimates of the POSIR quantiles for small regions, and reduce their computational cost. It may also prove beneficial in higher dimensions.

Another relevant, though challenging, possible extension of this work is to allow for more complex regions than rectangles. For instance, in order to cover more practical scenarios, one may be interested in allowing non-parametric regions, or parametric ones with a number of parameters diverging with the data-set size. From the theoretical viewpoint, extending our guarantees to these situations would necessitate to study random processes indexed by complex sets (for instance a set of ``smooth'' regions), or with index set changing with $n$.
Even if a limiting Gaussian process was obtained, more complex selected regions would also yield computational challenges in estimating the quantiles of the supremum of this process. Dedicated, more advanced, methods to this analysis of supremum could thus be required.

\paragraph{Acknowledgements.}
This work was supported by the Project GAP (ANR-21-CE40-0007) of the French National Research Agency (ANR).

\newcommand{\etalchar}[1]{$^{#1}$}

\appendix

\section{Additional numerical results}
In \Cref{fig:eff-conf-level_centered-pareto_3}, in the frame of \Cref{subsection:effective:confidence}, we provide for completeness the effective simultaneous error levels for the Pareto distribution with shape parameter $3$ and various $n$. As expected, the effective error levels are substantially closer to the nominal level than for the Pareto (2.1) distribution (see \Cref{fig:eff-conf-level_centered-pareto_2.1}). 
\begin{figure}[hptb]
  \includegraphics[width=\textwidth]{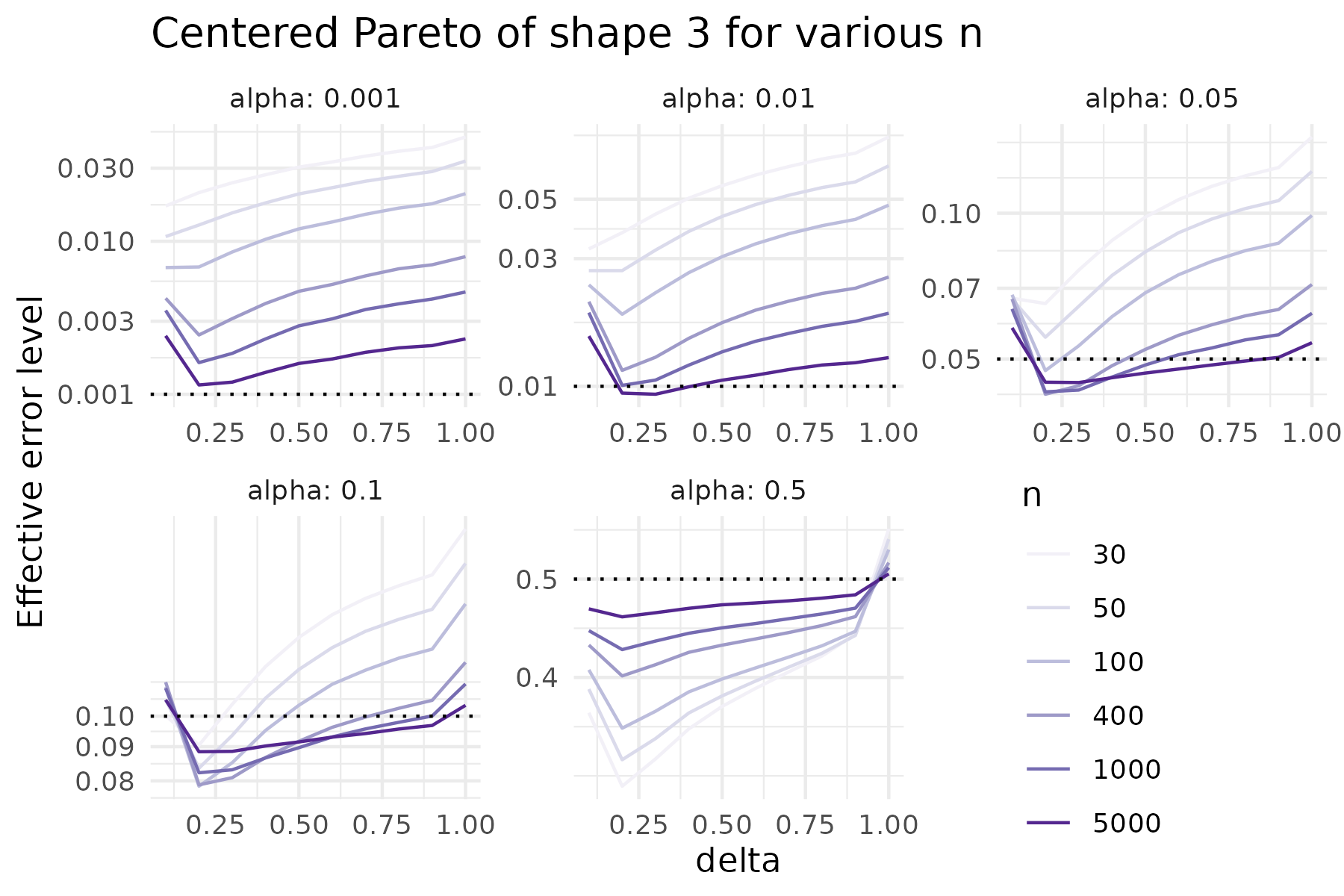}
  \vspace*{-1cm}
  \caption{Effective simultaneous error levels for the Pareto distribution with shape parameter $3$ and various $n$.}
  \label{fig:eff-conf-level_centered-pareto_3}
\end{figure}

\end{document}